\documentclass[12pt]{amsart}

\usepackage{amsmath,amsthm,amsfonts,amssymb,eucal, bbm}
\usepackage{scalerel}

\usepackage{color}

\renewcommand {\a}{ \alpha }
\renewcommand{\b}{\beta}

\newcommand{\g}{\gamma}

\renewcommand{\l}{\lambda}
\renewcommand{\L}{\Lambda}
\newcommand{\z}{\zeta}
\renewcommand{\t}{\theta}

\newcommand{\p}{\partial}
\newcommand{\om}{\omega}
\newcommand{\Om}{\Omega}

\newcommand{\oq}{\ {\raise 7pt\hbox{${\scriptstyle\circ}$}}
	\kern -7pt{
		\hbox{$Q$}}}

\newcommand{\R}{ \mathbb R}

\newcommand {\ba}{\mathbf a}

\newcommand {\BA}{\mathbf A}

\newcommand {\BS}{\mathbf S}

\newcommand {\bx}{\mathbf x}

\newcommand {\by}{\mathbf y}

\newcommand{\SV}{{\sf V}}

\newcommand{\lu}{\langle}
\newcommand{\ru}{\rangle}


\newcommand{\CH}{\mathcal H}

\newcommand{\CC}{\mathcal C}


\newcommand{\plainW}[1]{\textup{{\textsf{W}}}^{#1}}
\newcommand{\plainC}[1]{\textup{{\textsf{C}}}^{#1}}

\newcommand{\plainL}[1]{\textup{{\textsf{L}}}^{#1}}



\newcommand{\scalel}[1]
{{\scaleto{#1}{3pt}}}
\newcommand{\scalet}[1]
{{\scaleto{#1}{4pt}}}

\newcommand{\4}{\ \!{\vrule depth3pt height9pt}
	{\vrule depth3pt height9pt}
	{\vrule depth3pt height9pt}{\vrule depth3pt height9pt}}

\DeclareMathOperator {\dist} {{dist}}

\DeclareMathOperator{\iop}{{\sf Int}}

\DeclareMathOperator{\dc}{d}



\hfuzz1pc 
\vfuzz1pc

\newtheorem{thm}{Theorem}[section]
\newtheorem{cor}[thm]{Corollary}
\newtheorem{lem}[thm]{Lemma}
\newtheorem{prop}[thm]{Proposition}

\theoremstyle{definition}

\newtheorem{rem}[thm]{Remark}

\numberwithin{equation}{section}

%
%

\newcommand{\bee}{\begin{equation}}
	\newcommand{\ene}{\end{equation}}
\newcommand{\bees}{\begin{equation*}}
	\newcommand{\enes}{\end{equation*}}
\newcommand{\bes}{\begin{split}}
	\newcommand{\ens}{\end{split}}

\newcommand{\bet}{\begin{thm}}
	\newcommand{\ent}{\end{thm}}
\newcommand{\bel}{\begin{lem}}
	\newcommand{\enl}{\end{lem}}
\newcommand{\bec}{\begin{cor}}
	\newcommand{\enc}{\end{cor}}
\newcommand{\bep}{\begin{proof}}
	\newcommand{\enp}{\end{proof}}
\newcommand{\ber}{\begin{rem}}
	\newcommand{\enr}{\end{rem}}

\newcommand{\Z}{\mathbb Z}

\newcommand{\1}{\mathbbm 1}








\setlength{\textwidth}{450pt }

\begin{document}
	\hoffset -4pc

\title
[ Density matrix]
{{Eigenvalue estimates for the coulombic one-particle density matrix 
and the kinetic energy density matrix}} 
\author{Alexander V. Sobolev}
\address{Department of Mathematics\\ University College London\\
	Gower Street\\ London\\ WC1E 6BT UK}
 \email{a.sobolev@ucl.ac.uk}
\keywords{Multi-particle Schr\"odinger operator, one-particle density matrix, eigenvalues, integral operators}
\subjclass[2020]{Primary 35J10; Secondary 47G10, 81Q10}

\dedicatory{To the memory of Dima Yafaev}

\begin{abstract} 
Consider a bound state (an eigenfunction) $\psi$ of an atom with $N$ electrons.   
We study the spectra of the  one-particle 
density matrix $\g$ and of the one-particle 
kinetic energy density matrix $\tau$ associated with $\psi$. 
The paper contains two results. First, we obtain the bounds 
$\l_k(\g)\le C_1 k^{-8/3}$ and $\l_k(\tau)\le C_2 k^{-2}$ 
with some positive constants $C_1, C_2$ that depend explicitly on the  
eigenfunction $\psi$. The sharpness of these bounds is confirmed by the 
asymptotic results obtained by the author in earlier papers. 
The advantage of these bounds over the ones derived by the author previously, 
is their explicit dependence on the eigenfunction. Moreover, their new 
proofs are more elementary and direct. 
The second result is new and it pertains to the case 
where the eigenfunction $\psi$ vanishes at the particle 
coalescence points. In particular, this is true 
for totally antisymmetric $\psi$. In this case 
the eigenfunction $\psi$ exhibits enhanced 
regularity at the coalescence points which leads to the 
faster decay of the eigenvalues:  
$\l_k(\g)\le C_3 k^{-10/3}$ and $\l_k(\tau)\le C_4 k^{-8/3}$. 

The proofs rely on the estimates for the derivatives of the eigenfunction $\psi$ 
that depend explicitly on the distance to the coalescence points. 
Some of these estimates are 
borrowed directly from, and some are derived using the methods of a recent paper by 
S. Fournais and T. \O. S\o rensen.
\end{abstract}

\maketitle

\section{Introduction}

Consider on $\plainL2(\R^{3N})$ the Schr\"odinger operator 
\begin{align}
\CH = &\ \CH_0 + V,\quad \CH_0 = - \Delta = - \sum_{k=1}^N \Delta_k,\notag\\
V(\bx) = &\ - Z
\sum_{k=1}^N \frac{1}{|x_k|}  
 + \sum_{1\le j< k\le N} \frac{1}{|x_j-x_k|},\label{eq:potential}
\end{align}
describing an atom with $N$ particles 
(e.g. electrons)  
with coordinates $\bx = (x_1, x_2, \dots, x_N)$, $x_k\in\R^3$, $k= 1, 2, \dots, N$, 
and a nucleus with charge $Z>0$.  
The notation $\Delta_k$ is used for 
the Laplacian w.r.t. the variable $x_k$. 
The operator $\CH$ acts on the Hilbert space $\plainL2(\R^{3N})$ and by
 standard methods one proves that it is self-adjoint on the domain 
$D(\CH) =\plainW{2, 2}(\R^{3N})$, 
see e.g. \cite[Theorem X.16]{ReedSimon2}. (Here and throughout the paper we use the standard notation 
$\plainW{l, p}$ for the Sobolev spaces, where $l$ and $p$ indicate the smoothness and summability 
respectively). 
Our methods allow consideration of the molecular 
Schr\"odinger operator, but we restrict our attention to the atomic case for simplicity. 
Let $\psi = \psi(\bx)$,  
be an eigenfunction of the operator $\CH$ with an eigenvalue $E\in\R$, i.e. $\psi\in D(\CH)$ and 
\begin{align}\label{eq:eigen}
(\CH-E)\psi = 0.
\end{align}
For each $j=1, \dots, N$, we represent
\begin{align*}
\bx = (\hat\bx_j, x_j), \quad \textup{where}\ 
\hat\bx_j = (x_1, \dots, x_{j-1}, x_{j+1},\dots, x_N),
\end{align*}
with obvious modifications if $j=1$ or $j=N$. 
The \textit{one-particle density matrix} is defined as the function 
\begin{align}\label{eq:den}
\g_0(x, y) = \sum_{j=1}^N\int\limits_{\R^{3N-3}}\overline{\psi(\hat\bx_j, x)} \,
 \psi(\hat\bx_j, y)\,  d\hat\bx_j,\  \textup{a.e.}\ x\in\R^3,
 \ \textup{a.e.}\  y\in\R^3. 
\end{align} 
Note that traditionally, the one-particle density matrix is defined as  
$\g_0(y, x)$, see e.g. \cite{LLS2019}, but the permutation $x\leftrightarrow y$ 
will have no bearing on our results. 

Introduce also the function 
\begin{align}\label{eq:tau}
\tau_0(x, y) = 
\sum_{j=1}^N\int\limits_{\R^{3N-3}} \overline{\nabla_x\psi(\hat\bx_j, x)} 
\cdot\nabla_y \psi(\hat\bx_j, y)\  
d\hat\bx_j, 
\end{align}
that we call the \textit{one-particle kinetic energy density matrix}. 
The choice of this term does not seem to be standard, but it is partly 
motivated by the fact that the integral
$\int_{\R^3} \tau_0(x, x)\, dx $
gives the kinetic energy of the $N$ particles, see e.g. 
\cite[Section 2A]{Davidson1976}, \cite[Chapter 3]{LiebSei2010} or 
\cite[Section 4]{LLS2019}. 
If we assume that the particles are spinless fermions (resp. bosons), i.e. that the eigenfunction 
is totally antisymmetric 
(resp. symmetric) with respect to the permutations $x_k\leftrightarrow x_j$, 
$j, k = 1, 2, \dots, N, j\not = k$, 
then the formulas for $\g_0(x, y)$ and $\tau_0(x, y)$ take a more compact form:
\begin{align*}
\g_0(x, y) = &\ N \int\limits_{\R^{3N-3}} 
\overline{\psi(\hat\bx, x)}\, \psi(\hat\bx, y)\ d\hat\bx,\\
\tau_0(x, y) = &\ N\int\limits_{\R^{3N-3}} \overline{\nabla_x\psi(\hat\bx, x)} 
\cdot\nabla_y \psi(\hat\bx, y)\,d\hat\bx,
\end{align*}
where $\hat\bx = \hat\bx_N$. 

We consider $\g_0$ and $\tau_0$ as integral kernels of self-adjoint non-negative operators that we denote 
$\iop(\g_0)$ and $\iop(\tau_0)$ respectively. 
Since $\psi, \nabla\psi\in \plainL2(\R^{3N})$, these operators 
are trace class. We are interested in the precise decay rate of 
their eigenvalues $\l_k(\iop(\g_0))$, $\l_k(\iop(\tau_0))>0$, 
$k = 1, 2, \dots$, enumerated in non-increasing order. 
%
Recall that in the physics literature 
the eigenvalues $\l_k(\iop(\g_0))$ are 
called  \textit{occupation numbers}, see e.g. \cite{SzaboOstlund1996}.  
Both functions $\g_0$ and $\tau_0$ are 
key objects in multi-particle quantum mechanics, see 
\cite{RDM2000, Davidson1976, LLS2019, LiebSei2010, SzaboOstlund1996} 
for details and futher references, 
and both operators $\iop(\g_0)$ and $\iop(\tau_0)$ play a central 
role in quantum chemistry computations of atomic and molecular bound states, see e.g.    
\cite{Fries2003_1, Fries2003, Lewin2004, Lewin2011, SzaboOstlund1996}. 
The knowledge of the eigenvalue behaviour should 
serve to estimate the errors due to finite-dimensional 
approximations,  see e.g. 
\cite{Cioslowski2020, CioStras2021,  Fries2003, HaKlKoTe2012}.

The eigenvalue asymptotics for the operator $\iop(\g_0)$ in the case $N=2$ were discussed 
in \cite{Cioslowski2020, CioPrat2019}, and it was found that $\l_k(\iop(\g_0))$  
behave like $k^{-8/3}$ for large $k$. The case of arbitrary $N$ 
was studied in \cite{Sobolev2022} (for $\iop(\g_0)$) 
and \cite{Sobolev2022a} (for $\iop(\tau_0)$) 
under the assumption that the function $\psi$ satisfies an exponential bound 
\begin{align}\label{eq:exp}
\sup_{\bx\in\R^{3N}} e^{c |\bx|}\, 
 |\psi(\bx)|<\infty
\end{align}  
with some constant $c>0$.  
For discrete eigenvalues $E$ under 
the essential spectrum of $\CH$, the bound \eqref{eq:exp} follows from 
\cite{DHSV1978_79}. It also holds for embedded eigenvalues as long as they are away from the so-called \textit{thresholds}, see \cite{CombesThomas1973}, \cite{FH1982}. 
For more references and detailed discussion we quote \cite{SimonSelecta}. 
It was shown by the author in \cite{Sobolev2022, Sobolev2022a} that 
\begin{align}\label{eq:asymp}
\lim_{k\to \infty} k^{\frac{8}{3}} \,\l_k(\iop(\g_0)) = A^{\frac{8}{3}},\quad 
\lim_{k\to \infty} k^2 \,\l_k(\iop(\tau_0)) = B^2
\end{align}
with some coefficients $A\ge 0, B\ge 0$.
One central ingredient in the proof of \eqref{eq:asymp} was the sharp eigenvalue bounds 
obtained in \cite{Sobolev2022b} (for $\iop(\g_0)$) and \cite{Sobolev2022a} 
(for $\iop(\tau_0)$), under the same condition \eqref{eq:exp}. 
The purpose of the current paper is to offer a short elementary proof of 
the eigenvalue bounds and to derive ``improved" bounds for the case where the eigenfunction 
vanishes at the particle coalescence points. 
It is clear that it suffices to 
 study each term in \eqref{eq:den} and \eqref{eq:tau} individually. 
Moreover, using permutations of the variables it is sufficient to focus just on the last 
term in the sum \eqref{eq:den} or \eqref{eq:tau}:
\begin{align*}
\g(x, y) = &\ \int_{\R^{3N-3}}
\overline{\psi(\hat\bx, x)} \, \psi(\hat\bx, y) \, d\hat\bx,\\
\tau(x, y) = &\ \int_{\R^{3N-3}}
\overline{\nabla_x\psi(\hat\bx, x)}\cdot \nabla_y\psi(\hat\bx, y) \, d\hat\bx,\ 
\quad \hat\bx = (x_1, x_2, \dots, x_{N-1}).
\end{align*}
Throughout the paper we refer to these functions as the  one-particle density matrix 
and the  one-particle kinetic energy density matrix, instead of \eqref{eq:den} 
and \eqref{eq:tau}. 
The main results are stated in terms of the \textit{one-particle density} 
\begin{align}\label{eq:dens}
\rho(x) = \g(x, x) = \int_{\R^{3N-3}}
|\psi(\hat\bx, x)|^2 \, d\hat\bx.
\end{align}
Let $\CC = [-1/2, 1/2)^3\in \R^3$ be a unit cube, and let $\CC_n = \CC+n, n\in \mathbb Z^3,$ 
be its translations by integer vectors.  For $q >0$ and a function 
$f\in \plainL{1}_{\rm loc}(\R^3)$ define its \textit{lattice (semi-)norm}
\begin{align*}
\4\, f\4_{q} = \bigg[\sum_{n\in \Z^3} \, \|f\|_{\plainL1(\CC_n)}^q\bigg]^{\frac{1}{q}}.
\end{align*}
This functional defines a norm (if $q\ge 1$) or a semi-norm (if $q <1$). 
The next two theorems constitute the main results of the paper.

\begin{thm}\label{thm:main}
Let $\psi\in\plainL2(\R^{3N})$ 
be an eigenfunction of the operator $\CH$ defined in \eqref{eq:potential} 
with the eigenvalue $E$. Then 
\begin{align}
\l_k\big(\iop(\g)\big)\le &\ C k^{-\frac{8}{3}} \, \4\, \rho\4\,_{3/8},\label{eq:gbound}\\
\l_k\big(\iop(\tau)\big)\le &\ C k^{-2} \, \4\, \rho\4\,_{1/2}.\label{eq:taubound}
\end{align}
The constants $C>0$ in the above bounds do not 
depend on the eigenfunction $\psi$ but may depend on 
$E$ and $N$.
\end{thm}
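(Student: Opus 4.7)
The strategy is to recast the eigenvalue bounds as Schatten $p$-quasinorm estimates with $p<1$, localize on the cubes $\CC_{n}$, and reduce to weighted Sobolev-type derivative bounds on the eigenfunction. I begin by factoring $\iop(\g)=\Psi^{*}\Psi$, where $\Psi\colon\plainL2(\R^{3})\to\plainL2(\R^{3(N-1)})$ is the Hilbert-Schmidt operator with integral kernel $\psi(\hat\bx,y)$; analogously $\iop(\tau)=\Xi^{*}\Xi$ with $\Xi$ having the vector-valued kernel $\nabla_{y}\psi(\hat\bx,y)$. Since the non-zero eigenvalues of $\iop(\g)$ coincide with those of $\Psi\Psi^{*}$ (and similarly for $\iop(\tau)$), and since for any non-negative compact operator $T$ one has the elementary monotonicity $k\l_{k}(T)^{p}\le\sum_{j\le k}\l_{j}(T)^{p}$, the bounds \eqref{eq:gbound} and \eqref{eq:taubound} reduce to the Schatten-quasinorm estimates
\begin{equation*}
\|\iop(\g)\|_{\CS_{3/8}}^{3/8}\le C\4\, \rho\4\,_{3/8}^{3/8},\qquad
\|\iop(\tau)\|_{\CS_{1/2}}^{1/2}\le C\4\, \rho\4\,_{1/2}^{1/2}.
\end{equation*}

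The next step is localization. With the sharp cutoffs $\chi_{n}=\1_{\CC_{n}}$ giving a family of mutually orthogonal projections $\sum_{n}M_{\chi_{n}}=I$, set $\Psi_{n}=\Psi M_{\chi_{n}}$ and $\Xi_{n}=\Xi M_{\chi_{n}}$. The orthogonality yields $\Psi\Psi^{*}=\sum_{n}\Psi_{n}\Psi_{n}^{*}$ and $\Xi\Xi^{*}=\sum_{n}\Xi_{n}\Xi_{n}^{*}$ as sums of positive operators. The Rotfel'd--McCarthy concavity inequality $\tr(A+B)^{p}\le\tr A^{p}+\tr B^{p}$ (valid for $A,B\ge 0$ and $0<p\le 1$), together with $\|T^{*}T\|_{\CS_{p}}^{p}=\|T\|_{\CS_{2p}}^{2p}$, then produces
\begin{equation*}
\|\iop(\g)\|_{\CS_{3/8}}^{3/8}\le\sum_{n}\|\Psi_{n}\|_{\CS_{3/4}}^{3/4},\qquad
\|\iop(\tau)\|_{\CS_{1/2}}^{1/2}\le\sum_{n}\|\Xi_{n}\|_{\CS_{1}}.
\end{equation*}
Observing that $\|\Psi_{n}\|_{\CS_{2}}^{2}=\int_{\CC_{n}}\rho(y)\,dy=:\mu_{n}$, the whole argument is reduced to the local Schatten bounds $\|\Psi_{n}\|_{\CS_{3/4}}\le C\mu_{n}^{1/2}$ and $\|\Xi_{n}\|_{\CS_{1}}\le C\mu_{n}^{1/2}$; in the latter case a supplementary interior regularity (Caccioppoli-type) estimate is needed to replace the naive Hilbert-Schmidt quantity $\int_{\CC_{n}}\tau(y,y)\,dy$ by the local $\rho$-mass over a slightly enlarged cube, which is then absorbed into the lattice norm.

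The heart of the proof, and the main obstacle, is the derivation of these local bounds despite the limited regularity of $\psi$ at the electron-nucleus and electron-electron coalescence points. The key ingredients are pointwise weighted derivative estimates of the form
$\|\partial_{y}^{\alpha}\psi(\cdot,y)\|_{\plainL2(\R^{3(N-1)}_{\hat\bx})}\le F_{|\alpha|}(y)$
for $|\alpha|$ up to a critical order (essentially four for $\Psi_{n}$, three for $\Xi_{n}$), in which the weight $F_{|\alpha|}$ is locally integrable and controlled explicitly by the distance from $y$ to the coalescence set. These estimates are to be extracted from, or derived via the methods of, the Fournais-S\o rensen paper cited in the introduction. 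With such weighted Sobolev-type information in hand, a Birman-Solomyak-type embedding converts smoothness of the $\plainL2_{\hat\bx}$-valued kernel in the three-dimensional variable $y$ into Schatten-class membership: four extra $y$-derivatives are precisely what is needed to descend from the Hilbert-Schmidt class to $\CS_{3/4}$, and three derivatives suffice to reach $\CS_{1}$, corresponding exactly to the target decay rates $k^{-8/3}$ and $k^{-2}$. The $|\alpha|=0$ term contributes the $\mu_{n}^{1/2}$ factor, while the higher-order contributions provide the additional $k^{-4/3}$- (respectively $k^{-1}$-)type singular-value decay. Summing the local bounds over $n$ and recognizing the resulting lattice sum as $\4\, \rho\4\,_{3/8}^{3/8}$ (respectively $\4\, \rho\4\,_{1/2}^{1/2}$) closes the argument. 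The genuinely delicate point is the control of the weights $F_{|\alpha|}$ near the coalescence set, where the Coulomb cusps force one to use the full strength of the distance-weighted derivative estimates rather than any naive uniform bound.
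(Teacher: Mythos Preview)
Your overall architecture---factorize, localize to cubes, invoke Fournais--S{\o}rensen regularity, sum---matches the paper, but the reduction to \emph{strong} Schatten norms at the critical exponents is a genuine gap. The asymptotic formulas \eqref{eq:asymp} show that generically $\l_k(\iop(\g))\asymp k^{-8/3}$ and $\l_k(\iop(\tau))\asymp k^{-2}$, so $\sum_k\l_k(\iop(\g))^{3/8}$ and $\sum_k\l_k(\iop(\tau))^{1/2}$ both diverge: the operators lie on the boundary of $\BS_{3/8}$ and $\BS_{1/2}$ but not inside. The same failure occurs locally: for fixed $\hat\bx$ the kernel $\psi(\hat\bx,\cdot)$ has point singularities of homogeneity $\a=1$, giving Fourier decay $|\xi|^{-4}$ and hence $s_k(\Psi_n)\asymp k^{-4/3}$, which is exactly $\BS_{3/4,\infty}$ and not $\BS_{3/4}$; likewise $\Xi_n\in\BS_{1,\infty}\setminus\BS_1$. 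So the local estimates $\|\Psi_n\|_{\BS_{3/4}}\le C\mu_n^{1/2}$ and $\|\Xi_n\|_{\BS_1}\le C\mu_n^{1/2}$ that your argument hinges on are false in general, and the Rotfel'd--McCarthy step that precedes them has no finite input to work with. Perturbing to $p>3/8$ would make the norms finite but would only yield $\l_k\lesssim k^{-8/3+\varepsilon}$, missing the sharp exponent.

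The paper avoids this endpoint obstruction by working throughout in the weak classes $\BS_{p,\infty}$. The localization step is then handled not by operator concavity but by the orthogonality relation $\iop(W_k)\iop(W_j)^*=0$ for $k\ne j$ together with the $p$-triangle inequality \eqref{eq:ptriangle} in $\BS_{q,\infty}$ for $q<1$ (applied to $AA^*$, see Proposition~\ref{prop:orthog}), which gives $\|\sum_n\Psi_n\|_{3/4,\infty}^{3/4}\lesssim\sum_n\|\Psi_n\|_{3/4,\infty}^{3/4}$. The local $\BS_{q,\infty}$ bound itself is obtained by a direct Fourier-series argument on the cube (Lemma~\ref{lem:cubepi}) rather than by invoking Birman--Solomyak, using the Fourier decay estimate of Lemma~\ref{lem:fourier}; this is the ``elementary'' route advertised in the introduction. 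If you want to salvage your plan, the fix is to replace every $\BS_p$ by $\BS_{p,\infty}$ and replace Rotfel'd--McCarthy by an orthogonality-based summation in the weak quasinorm.
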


Under the assumption \eqref{eq:exp} the bounds 
$\l_k\big(\iop(\g)\big)\le  C k^{-\frac{8}{3}} $ 
and $\l_k\big(\iop(\tau)\big)\le C k^{-2}$ 
with constants depending on $\psi$, $N$ and $E$, 
were proved in \cite{Sobolev2022b} and \cite{Sobolev2022a} 
respectively. One advantage of \eqref{eq:gbound} and \eqref{eq:taubound} is 
their explicit dependence on the eigenfunction $\psi$. 
The right-hand sides of \eqref{eq:gbound} and \eqref{eq:taubound} are certainly 
finite under the assumption \eqref{eq:exp}, but an appropriate 
polynomial decay of $\psi$ would be sufficient. At the same time, the condition $\psi\in\plainL2(\R^{3N})$ 
is not enough. 

The next theorem provides eigenvalue bounds under the additional assumption that 
the eigenfunction $\psi$ vanishes at the particle coalescence points:
\begin{align}\label{eq:van}
\psi(\bx) = 0\quad \textup{if}\quad  x  = x_k, \quad \textup{for every}\  
k = 1, 2, \dots, N-1.
\end{align} 
This condition is satisfied, e.g. if the function  $\psi$ is totally antisymmetric, 
i.e. all particle pairs are in the triplet configuration, see \cite[Sect. 3.3.2]{HaKlKoTe2012}

\begin{thm}\label{thm:main0} 
Suppose that the eigenfunction $\psi$ satisfies \eqref{eq:van}. Then 
\begin{align}
\l_k\big(\iop(\g)\big)\le &\ C k^{-\frac{10}{3}} \, \4\, \rho\4\,_{3/10},\label{eq:gbound0}\\
\l_k\big(\iop(\tau)\big)\le &\ C k^{-\frac{8}{3}} \, \4\, \rho\4\,_{3/8}.\label{eq:taubound0}
\end{align}
The constants $C>0$ in the above bounds do not depend on the eigenfunction 
$\psi$ but may depend on $E$ and $N$.
\end{thm}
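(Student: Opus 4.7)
My plan is to mirror the scheme that produces Theorem \ref{thm:main}, replacing the derivative estimates on $\psi$ near the coalescence set by sharper ones that become available under the vanishing hypothesis \eqref{eq:van}, and then feeding these into the same dyadic localization and Schatten--ideal summation.

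First I would record refined pointwise and weighted Sobolev bounds for $\psi$. The Fournais--S\o rensen technique quoted in the abstract delivers, on strips away from the coalescence set, derivative bounds of the form $|\p_\bx^\alpha \psi(\bx)|\le C_\alpha\, r(\bx)^{1-|\alpha|}$, where $r(\bx)$ is the distance from $\bx$ to that set; these are the estimates that power the proof of Theorem \ref{thm:main}. Under \eqref{eq:van}, $\psi(\bx) = O(r(\bx))$ near each hyperplane $\{x = x_j\}$, $j = 1,\dots, N-1$, and running the same elliptic/local smoothing argument one additional time upgrades the bounds to $|\p_\bx^\alpha \psi(\bx)|\le C_\alpha\, r(\bx)^{2-|\alpha|}$. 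In other words, the Kato cusps of $\psi$ at $x = x_j$ are suppressed and one gains exactly one extra order of vanishing.

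Next I would insert these estimates into the dyadic/Whitney decomposition in the $x$-variable used for Theorem \ref{thm:main}. On a cube $Q_h\subset\R^3$ of sidelength $h$, the local piece of $\iop(\g)$ (resp.\ $\iop(\tau)$) is treated as an integral operator on $\plainL2(Q_h)$ whose kernel, after integration over $\hat\bx$, sits in a weighted Sobolev class controlled by the new derivative estimates. A Birman--Solomyak type bound then yields a Schatten quasinorm of the local piece that carries, relative to the unconditional case of Theorem \ref{thm:main}, an extra factor of $h^{2}$: the hypothesis \eqref{eq:van} forces $\psi = O(h)$ on cubes meeting the coalescence set, contributing an extra $h^2$ to $\rho$, and this gain persists after one differentiation for the $\tau$-kernel since the improved estimate $|\p_\bx^\alpha \psi|\le C r^{2-|\alpha|}$ still carries one extra order in $r$ at $|\alpha| = 1$.

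Finally I would assemble the local contributions by an $\ell^{q}$-summation over the cubes with $q$ equal to the Weyl exponent $1/\alpha$ of the target rate $k^{-\alpha}$: $q = 3/10$ for \eqref{eq:gbound0} and $q = 3/8$ for \eqref{eq:taubound0}. The improved local factors $h^2$ then combine with the new summability exponents to reproduce exactly the lattice seminorms $\4\,\rho\4_{3/10}$ and $\4\,\rho\4_{3/8}$ on the right-hand sides. The main technical obstacle is propagating the extra factor of $r(\bx)$ through the integration over $\hat\bx$ when three-particle configurations bring $r(\bx)$ close to zero for reasons unrelated to the vanishing \eqref{eq:van} (for example, two of the spectator coordinates $x_i, x_j$ nearly coinciding); verifying that the multi-variable Fournais--S\o rensen estimates accommodate this additional order of cancellation, without losing the polynomial control needed to integrate out $\hat\bx$, is where the bulk of the work lies.
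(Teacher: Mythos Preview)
Your outline has the right shape but misses a crucial structural point that the paper treats carefully: the vanishing hypothesis \eqref{eq:van} concerns only the electron--electron coalescence hyperplanes $\{x=x_j\}$, $j=1,\dots,N-1$, and says nothing about the electron--nucleus coalescence $\{x=0\}$, which also belongs to the set $\Sigma$ defining $r(\bx)=\dc(\bx)$. At $x=0$ the Kato cusp of $\psi$ persists regardless of \eqref{eq:van}, so the second $x$-derivatives of $\psi$ blow up like $|x|^{-1}$ there. Hence your claimed upgrade $|\p_x^{m}\psi(\bx)|\lesssim r(\bx)^{2-|m|}$ is false near $\{x=0\}$, and the Birman--Solomyak step on cubes that meet the origin would not yield the extra factor you need. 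The paper handles this by peeling off the nuclear cusp explicitly: with $F_0(x)=-\tfrac{Z}{2}|x|\t(x)$ one proves (Theorem~\ref{thm:varphider}) that $e^{-F_0}\psi$, not $\psi$ itself, satisfies $|\p_x^m(e^{-F_0}\psi)|\lesssim(1+\l(\bx)^{2-|m|})\|\psi\|_{\plainL2(B(\bx,R))}$, and then writes $\Psi a=\iop(e^{-F_0}\psi)\,e^{F_0}a$ and $\SV a=\iop(\nabla_x(e^{-F_0}\psi))\,e^{F_0}a+\iop(\psi)\,(\nabla_xF_0)a$ before invoking the integral-operator bound. Your scheme would go through once you insert this correction, but as written the key regularity claim fails.

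Two smaller remarks. First, the ``main technical obstacle'' you flag---spectator collisions $x_i=x_j$ with $i,j<N$---is a non-issue: the $x$-derivative bounds depend only on the distance $\dc(\bx)$ to the set $\Sigma$ of coalescences involving the last coordinate $x$, and spectator collisions do not enter $\Sigma$ or the estimate. Second, the paper does not actually use a Whitney decomposition plus Birman--Solomyak; instead it proceeds via a direct Fourier-series argument on unit cubes (Lemmata~\ref{lem:fourier} and~\ref{lem:cubepi}), which is more elementary and yields the explicit lattice-norm dependence on $\rho$. Your Birman--Solomyak route, once the $F_0$ correction is made, should also work and is closer in spirit to the author's earlier papers \cite{Sobolev2022a,Sobolev2022b}.
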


To the best of the author's knowledge, 
these bounds are new. Observe that the assumption \eqref{eq:van} leads 
to a faster decay of the eigenvalues than in Theorem \ref{thm:main}. 
As in Theorem \ref{thm:main}, under the condition 
\eqref{eq:exp} the right-hand sides of the bounds are finite.

\begin{rem}\label{rem:main}
\begin{enumerate}
 \item 
Theorems \ref{thm:main}, \ref{thm:main0} extend 
to the case of a molecule with several nuclei whose positions are fixed. 
The modifications are straightforward. 
\item 
One can also study the eigenvalues of the 
 $n$-particle density matrix with arbitrary $n = 1, 2, \dots, N-1$, 
 see \cite[Section 3.1.5]{LiebSei2010} for the definition. 
Some results in this direction were obtained in \cite{Hearnshaw2024} using the approach of 
\cite{Sobolev2022a}. 
 \end{enumerate}
\end{rem}

As in \cite{Sobolev2022a} and \cite{Sobolev2022b}, the first step in the proof of the two main theorems is to factorize the operators 
$\iop(\g)$ and $\iop(\tau)$ as follows: 
\begin{align}\label{eq:factor}
\iop(\g) = \Psi^*\Psi,\ \quad \iop(\tau) = \SV^*\SV,
\end{align}
where $\Psi: \plainL2(\R^3)\mapsto \plainL2(\R^{3N-3})$ and 
$\SV:\plainL2(\R^3)\mapsto \plainL2(\R^{3N-3}; \mathbb C^3)$ are the operators
\begin{align}\label{eq:psiv}
(\Psi f)(\hat\bx) = \int\psi(\hat\bx, x) f(x) \, dx,
\quad (\SV f)(\hat\bx) = \int\nabla_x \psi(\hat\bx, x) f(x) \, dx. 
\end{align}
Therefore $\l_k(\iop(\g)) = s_k(\Psi)^2$ and 
$\l_k(\iop(\tau)) = s_k(\SV)^2$, $k = 1, 2, \dots $, where 
$s_k(A)$ denote the singular values ($s$-values) of a compact operator $A$. Thus Theorems 
\ref{thm:main} and \ref{thm:main0} reduce to  estimating singular values of $\Psi$ and $\SV$. 

A general theory of spectral estimates for integral operators was developed in \cite{BS1977}. 
It demonstrates that the fall-off rate of the singular values 
increases together 
with the smoothness of the integral kernel. Quantitatively,  in \cite{BS1977}
the eigenvalues are estimated via suitable Sobolev or Besov norms of the kernels. 
The author applied such general estimates to  
the operators $\Psi$ and $\SV$ in  
\cite{Sobolev2022a} and \cite{Sobolev2022b}. In the current paper we suggest a direct, more elementary approach, 
which makes the proof of Theorems \ref{thm:main} and \ref{thm:main0} self-contained. 
We bypass the general bounds of \cite{BS1977} by 
directly estimating the singular values via the Fourier transform of $\psi(\hat\bx, x)$ in the variable $x$. 
In order to obtain suitable bounds for the Fourier transform one needs to control the regularity 
of $\psi$.   
Regularity of solutions for elliptic equations is a well-studied classical subject. 
In particular, due to the 
analyticity of the Coulomb potential $|x|^{-1}$ for $x\not = 0$, 
the classical results ensure that the function $\psi$ is real analytic away from the particle 
coalescence points. 
At the same time, as shown by 
T. Kato \cite{Kato1957}, the function $\psi$ is Lipschitz on the whole of $\R^{3N}$. 
Further regularity properties 
of $\psi$ were 
obtained, e.g.  in \cite{FHOS2005, FHOS2009, FS2021, HOS2001, HOSt1994}. 
The results of the paper 
\cite{FS2021} by S. Fournais and T. \O. S\o rensen, are of decisive importance for us, as they 
contain bounds on derivatives of $\psi$ of arbitrary order with 
explicit dependence on the distance of $x$ to the coalescence points. These results 
ensure a suitable Fourier transform estimates that lead to \eqref{eq:gbound} and \eqref{eq:taubound}. 

The estimates from \cite{FS2021} are not sufficient for the proof of 
Theorem \ref{thm:main0} however, since under the condition \eqref{eq:van} the 
eigenfunction $\psi(\hat\bx, x)$ has an improved regularity in the neighbourhood of 
the coalescence points. Precisely, 
in addition to being Lipschitz, it also has bounded second derivatives with respect to  
the variable $x$, see \cite{Hearnshaw2024}. 
Although this fact was not pointed out in \cite{FS2021}, it 
can be easily derived 
using the regularity result in \cite{FHOS2005}.  
We use this observation to obtain an ``improved" variant of the bounds from 
\cite{FS2021} using a slight modification 
of the method of \cite{FS2021} as proposed in \cite{HearnSob2023}. 
As a result, the Fourier transform of $\psi$ decays at infinity faster 
than that in Theorem \ref{thm:main} which entails 
a faster eigenvalue decay in Theorem \ref{thm:main0}.


%

The plan of the paper is as follows.
In Sect. \ref{sect:reg} we collect all the information on regularity of the function $\psi$ 
necessary for the proofs of the main theorems. 
A short Sect. \ref{sect:compact} gathers definitions and elementary properties of 
classes $\BS_{p, \infty}$ of compact operators. 
In Sect. \ref{sect:int} we derive spectral estimates for integral operators. These 
estimates are used to prove Theorems 
\ref{thm:main} and \ref{thm:main0} in Sect. \ref{sect:proof}.

We conclude the introduction with some general notational conventions.  

\textit{Coordinates.} 
As mentioned earlier, we use the following standard notation for the coordinates: 
$\bx = (x_1, x_2, \dots, x_N)$,\ where $x_j\in \R^3$, $j = 1, 2, \dots, N$. 
The vector $\bx$ is usually represented in the form 
$\bx = (\hat\bx, x)$ with  
$\hat\bx = (x_1, x_2, \dots, x_{N-1})\in\R^{3N-3}$,\, $x = x_N$.   
The notation $B(t, R)\subset\R^d$ is used for the open ball of radius $R>0$ centred at $t\in \R^d$, 
and $|B(t, R)|$ denotes its volume ($d$-dimensional Lebesgue measure).  
For instance, $B(\bx, R)\subset \R^{3N}$ and $B(\hat\bx, R)\subset \R^{3N-3}$.

\textit{Indicators.} For any set $\L\subset\R^d$ we denote by $\1_{\L}$ its indicator function (or indicator).
 
\textit{Derivatives.} 
Let $\mathbb N_0 = \mathbb N\cup\{0\}$.
If $x = (x', x'', x''')\in \R^3$ and $m = (m', m'', m''')\in \mathbb N_0^3$, then 
the derivative $\p_x^m$ is defined in the standard way:
\begin{align*}
\p_x^m = \p_{x'}^{m'}\p_{x''}^{m''}\p_{x'''}^{m'''}.
\end{align*} 
  
\textit{Bounds.} 
For two non-negative numbers (or functions) 
$X$ and $Y$ depending on some parameters, 
we write $X\lesssim Y$ (or $Y\gtrsim X$) if $X\le C Y$ with 
some positive constant $C$ independent of those parameters. 
If $X\lesssim Y\lesssim X$, then  we say that $X\asymp Y$. 
To avoid confusion we may sometimes comment on the nature of 
(implicit) constants in the bounds. 

\textit{Integral operators.}
We introduce the notation $\iop(T):\plainL2(\R^3)\mapsto \plainL2(\R^{3N-3})$   
for the integral operator with the kernel $T(\hat\bx, x)$. 

\section{Regularity of $\psi$}\label{sect:reg}
 
In this section we collect the pointwise 
bounds for derivatives of the eigenfunction $\psi$ of arbitrary order with explicit dependence 
on the distance to the coalescence points.  

\subsection{Bounds
}
Denote by $\Sigma$ the coalescence set:
\begin{align*}
\Sigma = \bigg\{\bx = (\hat\bx, x)\in\R^{3N}: |x| 
\prod_{1\le k\le N-1}|x_k-x| = 0\bigg\}.
\end{align*}
Then 
\begin{align}\label{eq:dq}
\dc(\bx) = \min\{|x|, \frac{1}{\sqrt 2}
|x-x_k|: \ 1\le k\le N-1\},
\end{align}
defines the distance from $\bx$ to $\Sigma$. 
Introduce also the capped distance $\l(\bx) := \min\{1, \dc(\bx)\}$.  
The functions $\dc(\,\cdot\,)$ and $\l(\,\cdot\,)$ are 
Lipschitz with Lipschitz constant $=1$ 
(see \cite[Lemma 4.2]{HearnSob2023}):  for all $\bx, \by\in\R^{3N}$ we have 
\begin{align}\label{eq:lip}
|\dc(\bx) - \dc(\by)|\le |\bx-\by|,\ \quad |\l(\bx) - \l(\by)|\le |\bx-\by|.
\end{align}
The following bound for the derivatives of $\psi$ 
is a consequence of \cite[Corollary 1.2]{FS2021}.  

\begin{prop}\label{prop:FS} 
For all $\bx\notin\Sigma$, and for 
arbitrary $R >0$, we have the bound 
\begin{align}\label{eq:FS}
|\p_{x}^{m}\psi(\hat\bx, x)|\lesssim \big(1+\l(\bx)^{1-|m|}\big)\, \|\psi\|_{\plainL2(B(\bx, R))}, 
\, \quad \textup{for all}\quad m\in\mathbb N_0^{3},
\end{align}
with a constant that does not depend on $\bx$ and $\psi$, but may depend on $m$ and $R$.
\end{prop}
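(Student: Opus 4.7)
The plan is to derive the bound from the general regularity result of Fournais and S\o rensen \cite[Corollary 1.2]{FS2021}, which controls all mixed derivatives $\p_{\bx}^{\alpha}\psi$ of arbitrary order in terms of a negative power of $\l(\bx)$ times a norm of $\psi$. What needs to be done here is to extract from that statement the two features required in Proposition \ref{prop:FS}: differentiation only in the last block of variables $x=x_N$, and the right-hand side being the local $\plainL2$ norm over $B(\bx,R)$ with an arbitrary (in particular, not necessarily small) radius $R>0$.

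First I would apply \cite[Corollary 1.2]{FS2021} on a ball $B(\bx,\l(\bx)/2)$, which lies entirely in the analyticity region $\R^{3N}\setminus\Sigma$ of the Coulomb potential $V$. That corollary yields a pointwise estimate of the form
\begin{equation*}
|\p_{\bx}^{\alpha}\psi(\bx)|\lesssim \l(\bx)^{-|\alpha|}\,\|\psi\|_{\plainL2(B(\bx,\l(\bx)/2))}
\end{equation*}
for all $\alpha\in\mathbb N_0^{3N}$, with a constant depending only on $|\alpha|$. Restricting $\alpha$ to multi-indices supported on the last three coordinates converts this into a bound for $\p_x^m\psi$ with $m\in\mathbb N_0^3$. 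Since $\l(\bx)\le 1$, multiplying the factor $\l(\bx)^{-|m|}$ by a harmless $\l(\bx)$ yields $\l(\bx)^{1-|m|}$; the additional additive $1$ in \eqref{eq:FS} then absorbs the bounded cases $|m|=0$ and $|m|=1$, where Kato's Lipschitz regularity already provides an $\plainL\infty$ bound on any fixed compact ball.

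Second, I would upgrade the $\plainL2$ norm over the (possibly very small) ball $B(\bx,\l(\bx)/2)$ to the $\plainL2$ norm over the fixed ball $B(\bx,R)$. The trivial monotonicity inequality
\begin{equation*}
\|\psi\|_{\plainL2(B(\bx,\l(\bx)/2))}\le \|\psi\|_{\plainL2(B(\bx,R))}
\end{equation*}
already does the job in the regime $\l(\bx)/2\le R$. This, together with the Lipschitz property \eqref{eq:lip} of $\l$, gives the desired bound \eqref{eq:FS} with a constant depending on $R$ and $m$ but independent of $\bx$ and $\psi$.

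The main obstacle is a bookkeeping one: confirming that the inductive scheme of \cite{FS2021}, which is built around scaling by $\l(\bx)$ and the analyticity of $V$ on $\R^{3N}\setminus\Sigma$, produces a constant that is genuinely uniform in $\bx$ and linear in $\|\psi\|_{\plainL2}$, rather than hiding a dependence on a global Sobolev norm of $\psi$. Once one traces through the constants in the Cauchy-type estimates used in \cite[\S 4]{FS2021}, the linear dependence on the local $\plainL2$ norm comes out automatically, since the induction is seeded by an elliptic $\plainL2\to \plainL\infty$ bound on a ball strictly inside the analytic region, where the coefficients of $\CH-E$ are smooth.
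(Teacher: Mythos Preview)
Your argument has a genuine gap at the step where you pass from $\l(\bx)^{-|m|}$ to $\l(\bx)^{1-|m|}$. Since $\l(\bx)\le 1$, one has $\l(\bx)^{-|m|}\ge \l(\bx)^{1-|m|}$, so the bound $|\p_x^m\psi|\lesssim \l^{-|m|}\|\psi\|_{\plainL2}$ is \emph{weaker} than the desired $|\p_x^m\psi|\lesssim (1+\l^{1-|m|})\|\psi\|_{\plainL2}$; multiplying an upper bound by a factor $\l(\bx)\le 1$ sharpens it, which is not permitted. Concretely, $\l^{-|m|}$ is not dominated by $1+\l^{1-|m|}$ as $\l\to 0$ (the ratio blows up like $\l^{-1}$), so \eqref{eq:FS} does not follow from the estimate you wrote down.

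The fix is that \cite[Corollary~1.2]{FS2021} already yields the exponent $1-|m|$, not $-|m|$: the extra saved power comes from Kato's global Lipschitz bound on $\psi$, which seeds the Fournais--S\o rensen iteration. Once you quote the corollary with the correct exponent, the remaining reductions (restricting to derivatives in $x=x_N$, enlarging the ball to radius $R$ by monotonicity) are fine, and your argument coincides with what the paper does. The paper also notes an alternative route (see the remark after the proof of Theorem~\ref{thm:varphider}): repeat that proof for $\psi=e^{F}\phi$ in place of $e^{-F_0}\psi=e^{F_1}\phi$; the Leibniz expansion together with \eqref{eq:ef} and \eqref{eq:regphi} then produces $1+\l^{1-|m|}$ directly. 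A minor side remark: the ball $B(\bx,\l(\bx)/2)$ is \emph{not} contained in the analyticity region of $V$, since $\Sigma$ only records coalescences involving $x_N$; the other Coulomb singularities are handled internally by the estimates of \cite{FS2021}, not by avoiding them geometrically.
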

 
The estimate \eqref{eq:FS} will be sufficient for the proof of Theorem \ref{thm:main}. 
With regard to Theorem \ref{thm:main0}, 
%
%
we shall see 
that under the assumption \eqref{eq:van}  the function $\psi$ has ``enhanced" regularity at the 
coalescence points $x = x_k$, $k = 1, 2, \dots, N-1$, 
but not at $x = 0$. To account for the singularity at $x = 0$ 
we introduce the Lipschitz function 
\begin{align}\label{eq:F0}
F_0(x) = -\frac{Z}{2}|x|\,\t(x),
\end{align}
where $\t\in \plainC\infty_0(\R^3)$ is such that $\t(x) = 1$ for $|x|<1$. 
Then we have the following result:

\begin{thm}\label{thm:varphider} Assume that \eqref{eq:van} holds. 
Then for any $R>0$ and for all $\bx\notin\Sigma$, we have   
\begin{align}\label{eq:varphider}
|\p_x^m\, \big(e^{-F_0(x)}\,\psi(\bx)\big)|
\lesssim (1+\l(\bx)^{2- |m|}) \, \|\psi\|_{\plainL2(B(\bx, R))}, 
\end{align}
for all $m\in\mathbb N_0^3$.
\end{thm}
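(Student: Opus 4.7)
The plan is to follow the general scheme of \cite{FS2021} with the modification proposed in \cite{HearnSob2023}, but with an enhanced base case that reflects the additional regularity afforded by \eqref{eq:van}. The key new input is that $u(\bx):=e^{-F_0(x)}\psi(\bx)$ will be shown to have bounded second $x$-derivatives at every coalescence point, gaining one order of smoothness over the generic estimate of Proposition \ref{prop:FS}.

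The first step is to establish the base case $|m|\le 2$:
\begin{align*}
|\partial_x^m u(\bx)| \lesssim \|\psi\|_{\plainL2(B(\bx, R))}, \qquad \bx\notin\Sigma.
\end{align*}
The multiplier $e^{-F_0(x)}$ from \eqref{eq:F0} is tuned so that in the transformed equation
\begin{align*}
-\Delta_x u - 2\nabla F_0\cdot\nabla_x u - \bigl(\Delta F_0 + |\nabla F_0|^2\bigr)u + V u = E u + \Delta_{\hat\bx} u,
\end{align*}
the Coulomb singularity $-Z/|x|$ of $V$ is cancelled by $-\Delta F_0 = Z/|x|$, leaving a potential that is bounded near the nucleus. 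At a two-particle coalescence $x=x_k$ with $k\le N-1$, the vanishing hypothesis \eqref{eq:van} combined with the partial-regularity result of \cite{FHOS2005} yields a factorisation $\psi = |x-x_k|\widetilde\psi_k$ with $\widetilde\psi_k$ Lipschitz, so that the Coulombic term $|x-x_k|^{-1} u$ is bounded as well. A local elliptic estimate then delivers the $|m|=2$ bound, while the cases $|m|\le 1$ follow from Kato's Lipschitz regularity of $\psi$.

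For $|m|\ge 3$ the strategy is induction on $|m|$ driven by differentiating the above equation for $u$. Away from $\Sigma$, the effective potential $W$ appearing there is real-analytic on $\R^3_x\setminus\{0, x_1,\ldots, x_{N-1}\}$ and admits Cauchy-type bounds of the form $|\partial_x^\gamma W(\hat\bx, x)| \lesssim \gamma!\,\lambda(\bx)^{-|\gamma|-1}$. Fixing $\bx\notin\Sigma$, one works on the ball $B(x, c\lambda(\bx))\subset\R^3$, rescales to unit size, and iterates interior elliptic estimates combined with the Cauchy-product combinatorial argument of \cite{FS2021} as reformulated in \cite{HearnSob2023}. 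Because the base case now controls two derivatives rather than one, the rescaling produces the factor $\lambda(\bx)^{2-|m|}$ in place of $\lambda(\bx)^{1-|m|}$.

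The principal obstacle will be the base case: extracting a quantitative $|x-x_k|$-factorisation of $\psi$ from \eqref{eq:van}, with the Lipschitz constant of $\widetilde\psi_k$ controlled by $\|\psi\|_{\plainL2(B(\bx, R))}$ uniformly in the parameter $\hat\bx$. Once this is in place, the inductive step is essentially bookkeeping, with only the exponent shift $1\to 2$ distinguishing the argument from that of \cite{FS2021, HearnSob2023}; the Lipschitz property \eqref{eq:lip} of $\lambda$ and the boundedness $\lambda\le 1$ then allow the two regimes $|m|\le 2$ and $|m|\ge 3$ to be combined into the single bound \eqref{eq:varphider}.
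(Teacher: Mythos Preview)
Your plan diverges from the paper's in a way that creates a real difficulty. The paper does not run the elliptic induction on $u=e^{-F_0}\psi$ directly; it passes through the \emph{full} Jastrow $\phi=e^{-F}\psi$ with $F$ as in \eqref{eq:jascut}, which cancels \emph{all} Coulomb singularities of $V$. This makes the coefficient $b=V-\Delta F-|\nabla F|^2-E$ in \eqref{eq:jastrow} globally bounded, so Proposition~\ref{prop:reg} applies on balls $B(\bx_0,R\l)$ and yields Theorem~\ref{thm:regphi}, namely $|\p_x^m\phi|\lesssim (1+\l^{2-|m|})\|\psi\|_{\plainL2}$, \emph{without} using \eqref{eq:van}. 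The vanishing hypothesis enters only at the very end: writing $e^{-F_0}\psi=e^{F_1}\phi$ and expanding by Leibniz, every term $\p_x^k e^{F_1}\,\p_x^{m-k}\phi$ with $k<m$ is handled by \eqref{eq:ef} and \eqref{eq:regphi}, while for the top term $(\p_x^m e^{F_1})\phi$ one uses $\phi(\hat\bx,x_j)=0$ together with the Lipschitz bound \eqref{eq:grphi} to get $|\phi|\lesssim\l_1\|\psi\|_{\plainL2}$, which absorbs the extra $\l_1^{-1}$ from \eqref{eq:ef}.

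Your equation for $u$ removes only the nuclear term $-Z/|x|$; the coefficient still contains $-Z/|x_j|$ and $|x_j-x_k|^{-1}$ for $j,k\le N-1$. These are singular in the $\hat\bx$ variables and are not measured by $\l(\bx)$, which only sees $x$-coalescences. At any $\bx_0\notin\Sigma$ with, say, $x_{0,1}=0$, your zero-order coefficient is unbounded on every ball $B(\bx_0,r)\subset\R^{3N}$, so neither your base-case elliptic estimate nor a Proposition~\ref{prop:reg}--type rescaled induction can even be formulated there. There is also a secondary gap in the base case: the factorisation $\psi=|x-x_k|\widetilde\psi_k$ with $\widetilde\psi_k$ Lipschitz is false already when $\psi$ vanishes linearly (take $\psi\sim c\cdot(x-x_k)$), so $|x-x_k|^{-1}u$ is merely bounded, not H\"older, and an $\plainL\infty$ right-hand side gives $\plainW{2,p}$ but not $\plainW{2,\infty}$. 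The paper obtains the needed $|m|=2$ bound through the extra Jastrow $e^{F_3}$ of \cite{FHOS2005} (Proposition~\ref{prop:f3} and Lemma~\ref{lem:f3}), not via a direct elliptic estimate on $u$.
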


The constants in \eqref{eq:varphider} depend on  
the function $\t$ but we assume that $\t$ is fixed throughout the paper 
and ignore the dependence on $\t$ in this and all the subsequent inequalities.

The rest of this section is focused on the proof of Theorem \ref{thm:varphider}. 
It requires detailed information 
on the structure of the function $\psi$, which is discussed next. 

\subsection{The Jastrow factors} 
The starting point of analysis in \cite{FHOS2005, FHOS2009, FS2021, HOS2001} 
and in many other papers on the subject, is the representation $\psi = e^F \phi$, where $F$ is a 
function that satisfies 
$F, \nabla F\in \plainL\infty_{\tiny {\rm loc}}(\R^{3N})$ and 
that captures the main singularity of the solution, so that 
$\phi$ is more regular than $\psi$. 
In the mathematical physics literature the function $e^F$ is often called 
a \textit{Jastrow factor}, see e.g. \cite{HaKlKoTe2012}.  
After the substitution the equation \eqref{eq:eigen} rewrites as 
\begin{align}\label{eq:jastrow}
-\Delta \phi - 2\nabla F\cdot \nabla\phi + (V- \Delta F - |\nabla F|^2-E)\phi = 0. 
\end{align}
More precisely,  if $\psi\in \plainW{1, 2}_{\tiny {\rm loc}}(\R^{3N})$ is a weak 
solution of \eqref{eq:eigen}, 
then we also have $\phi\in \plainW{1, 2}_{\tiny {\rm loc}}(\R^{3N})$ 
and $\phi$ is a weak solution of \eqref{eq:jastrow}.  
 If one chooses $F$ to be 
\begin{align}\label{eq:oldF}
\sum_{j=1}^N \bigg(-\frac{Z}{2}|x_j| 
+ \frac{1}{4}\sum_{j<k\le N}|x_j-x_k|\bigg),
\end{align} 
as in \cite{Leray1984}, 
then $\nabla F$ is bounded,  
$\Delta F = V$, and hence all the coefficients in \eqref{eq:jastrow} are uniformly bounded. 
The  function $F$ itself however is not bounded. To remedy this, 
we take the following regularized variant of \eqref{eq:oldF}. 
Let $\t\in \plainC\infty_0(\R^3)$ be as in the definition 
\eqref{eq:F0}. 
From now on we assume that $F$ is given by 
\begin{align}\label{eq:jascut}
F(\bx) = -\frac{Z}{2}\sum_{j=1}^N |x_j|\t(x_j) 
+ \frac{1}{4}\sum_{1\le j<k\le N}|x_j-x_k| \t(x_j-x_k), 
\end{align}
so that 
\begin{align}\label{eq:Finf}
F,\, \nabla F,\, \nabla^k (V-\Delta F)\in \plainL\infty(\R^{3N}),\quad \textup{for all}
\quad k = 0, 1, \dots.
\end{align} 
A different regularization (still satisfying \eqref{eq:Finf}) of the Jastrow factor was used in 
\cite{HOS2001}, \cite{FHOS2002}, \cite{FHOS2004} and \cite{FS2021}. 
The function $F(\hat\bx, x)$ naturally splits into two components:
$F(\hat\bx, x) = F_0(x) + F_1(\bx)$, where $F_0$ is as defined in \eqref{eq:F0} and 
\begin{align}\label{eq:F1}
F_1(\bx) = &\ 
\frac{1}{4}\sum_{1\le k \le N-1}|x-x_k|\, \t(x-x_k) \notag\\
&\ \quad\quad -\frac{Z}{2}\sum_{j=1}^{N-1} |x_j|\,\t(x_j) 
 + 
\frac{1}{4}\sum_{1\le j<k\le N-1}|x_j-x_k|\, \t(x_j-x_k).
\end{align} 
%
%
%
The function $e^F$ is only Lipschitz but, 
since the coefficients in \eqref{eq:jastrow} are bounded, we have 
$\phi \in \plainC{1,\mu}_{\tiny{\rm loc}}(\R^{3N})$ for 
all $\mu\in (0,1)$, see \cite[Proposition 1.5]{HOS2001}.  
Moreover, using the factorization 
$\psi = e^F\phi$ it was shown in \cite[Theorem 1.2]{HOS2001} that for all 
$\bx_0\in\R^{3N}$ and for any positive pair $r, R$ such that $r < R$, the bound holds:
\begin{align*}
\|\nabla\psi\|_{\plainL\infty(B(\bx_0, r))} \lesssim \|\psi\|_{\plainL\infty(B(\bx_0, R))}, 
\end{align*}
with a constant independent of $\psi$ and $\bx_0$.  
Slightly more general estimates 
were found in 
\cite[Proposition A.2]{FS2021}, see also \cite[Lemma 4.1]{HearnSob2023} for a shorter proof:
\begin{align}\label{eq:grphi}
\|\phi\|_{\plainL\infty(B(\bx_0, r))} + \|\nabla\phi\|_{\plainL\infty(B(\bx_0, r))} 
\lesssim \|\psi\|_{\plainL2(B(\bx_0, R))},
\end{align}
\begin{align}\label{eq:grpsi}
\|\psi\|_{\plainL\infty(B(\bx_0, r))} + \|\nabla\psi\|_{\plainL\infty(B(\bx_0, r))} 
\lesssim \|\psi\|_{\plainL2(B(\bx_0, R))}.
\end{align}
To improve the $\plainC{1, \mu}$-regularity of $\phi$, we 
introduce yet another Jastrow factor, 
\begin{align*}
F_3(\bx) = \frac{2-\pi}{12\pi} Z \sum_{1\le j < k\le N} 
x_j\cdot x_k \ln\big(|x_j|^2 + |x_k|^2\big) \t(x_j)\t(x_k). 
\end{align*}
According to \cite[Theorem 1.1, Remark 1.6]{FHOS2005}, the function $\tilde\phi$ defined as 
\begin{align*}
\tilde\phi(\bx) := e^{-F_3(\bx)} \, \phi(\bx) =  e^{-F(\bx) - F_3(\bx)} \psi(\bx),
\end{align*}
has $\plainC{1,1}(=\plainW{2, \infty})$-regularity:
 
\begin{prop}\label{prop:f3}
The function $\tilde\phi(\bx)$ 
belongs to $\plainW{2, \infty}(\R^{3N})$ and for any pair $r, R$, $0 < r < R$, and all 
$\bx_0\in\R^{3N}$ the bound holds 
\begin{align*}
\|\tilde\phi\|_{\plainW{2, \infty}(B(\bx_0, r))}\lesssim \| \psi\|_{\plainL\infty(B(\bx_0, R))}, 
\end{align*}
with a constant independent of $\psi$ and $\bx_0$. 
\end{prop}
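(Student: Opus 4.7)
The plan is to derive a linear elliptic PDE for $\tilde\phi$ from the eigenvalue equation for $\psi$ and then extract the local $\plainW{2,\infty}$ bound via standard interior elliptic estimates, using the $\plainC{1,1}$ regularity granted by \cite[Theorem 1.1, Remark 1.6]{FHOS2005} as a black box.

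First, substituting $\psi = e^{F+F_3}\tilde\phi$ into $(\CH - E)\psi = 0$ yields
\begin{align*}
-\Delta\tilde\phi - 2\nabla(F+F_3)\cdot\nabla\tilde\phi + W\,\tilde\phi = 0,
\end{align*}
where $W = V - \Delta(F+F_3) - |\nabla(F+F_3)|^2 - E$. The whole point of the definition of $F_3$ (and the reason why \cite{FHOS2005} singles it out) is that the logarithmic-type singularities coming from $\Delta F_3$ and $|\nabla F_3|^2$ conspire with $V-\Delta F$ so that $W$, and in fact the full collection of coefficients, lie in $\plainL\infty_{\textup{loc}}$ with bounds depending only on fixed structural constants. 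I would quote the relevant output of \cite{FHOS2005} to avoid reproducing this cancellation computation.

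Next I would obtain the $\plainL\infty$ part of the bound. By \eqref{eq:grphi} and the local boundedness of $e^{-F_3}$ (since $F_3$ is continuous and grows only logarithmically times a compactly supported cutoff),
\begin{align*}
\|\tilde\phi\|_{\plainL\infty(B(\bx_0, r))} \lesssim \|\phi\|_{\plainL\infty(B(\bx_0, r))} \lesssim \|\psi\|_{\plainL2(B(\bx_0, R))} \lesssim \|\psi\|_{\plainL\infty(B(\bx_0, R))}.
\end{align*}
For the second-derivative estimate I would view the PDE as $-\Delta\tilde\phi = g$ with
$g = 2\nabla(F+F_3)\cdot\nabla\tilde\phi - W\tilde\phi$, and combine the $\plainW{2,\infty}$ statement from \cite{FHOS2005} with a cutoff argument: take $\chi\in\plainC\infty_0(B(\bx_0,(r+R)/2))$ with $\chi\equiv 1$ on $B(\bx_0,r)$, compute $-\Delta(\chi\tilde\phi)$, and apply an interior Calderón--Zygmund estimate at scale comparable to $R-r$, bounding everything that appears on the right-hand side by $\|\tilde\phi\|_{\plainL\infty}+\|\nabla\tilde\phi\|_{\plainL\infty}$ on a slightly larger ball and using \eqref{eq:grphi} to trade these for $\|\psi\|_{\plainL\infty(B(\bx_0,R))}$. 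A short iteration over a telescoping sequence of balls between radii $r$ and $R$ removes any small loss.

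The main obstacle is really conceptual rather than computational: one must invoke the precise $\plainC{1,1}$ output from \cite{FHOS2005} without reproducing the delicate algebraic cancellations between the explicit logarithmic corrections in $F_3$ and the Coulomb terms in $V$ near three-body coalescence configurations, which are the whole reason the simpler Jastrow factor $F$ alone only yields $\plainC{1,\mu}$ for $\mu<1$. Once that is taken for granted, the passage from the abstract regularity statement to the quantitative local bound stated here is a routine localization plus interior estimate.
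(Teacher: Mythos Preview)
The paper does not supply its own proof of this proposition; it is stated as a direct citation of \cite[Theorem~1.1, Remark~1.6]{FHOS2005}, which already furnishes both the qualitative $\plainW{2,\infty}$ regularity and the quantitative local bound with constant independent of $\psi$ and $\bx_0$.

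Your attempt to upgrade the qualitative $\plainC{1,1}$ statement to the quantitative bound by a separate argument has a genuine gap: the interior Calder\'on--Zygmund estimate you invoke fails at the $\plainL\infty$ endpoint. From $-\Delta(\chi\tilde\phi)\in\plainL\infty$ one obtains only $D^2(\chi\tilde\phi)\in\mathrm{BMO}$, not $D^2(\chi\tilde\phi)\in\plainL\infty$. To reach $\plainL\infty$ bounds on second derivatives via Schauder-type estimates one would need H\"older control of the right-hand side $g = 2\nabla(F+F_3)\cdot\nabla\tilde\phi - W\tilde\phi$, but $\nabla F_3$ and $W$ carry logarithmic-type singularities at three-particle coalescence points and are not H\"older there. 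This endpoint failure is precisely why the $\plainC{1,1}$ result of \cite{FHOS2005} is delicate and cannot be recovered by generic elliptic regularity; the quantitative bound is part of what one must take from \cite{FHOS2005}, not something that can be re-derived afterwards by the route you sketch.
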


%
The function $F_3$ belongs to $\plainW{2,p}(\R^{3N})$ for all 
$p < \infty$, but not for $p=\infty$. This 
is not sufficient for the $\plainW{2, \infty}$-smoothness of $\phi = e^{F_3}\tilde\phi$. 
Nevertheless, the following, slightly weaker fact is true (see  \cite{Hearnshaw2024}).  

\begin{lem} \label{lem:f3}
The derivative $\p_{x}^m \phi(\hat\bx, x)$, $|m|=2$, 
$m\in \mathbb N_0^3$,  
is globally bounded, and 
for any pair $r, R$, $0 < r < R$, and each $\bx_0\in\R^{3N}$ we have the estimate
\begin{align}\label{eq:nabla2phi}
\|\p_{x}^m\phi\|_{\plainL\infty(B(\bx_0, r))}\lesssim \|\psi\|_{\plainL2(B(\bx_0, R))}, 
\end{align}
with a constant independent of $\psi$ and $\bx_0$. 
\end{lem}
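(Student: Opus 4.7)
The plan is to exploit the factorization $\phi(\bx) = e^{F_3(\bx)}\,\tilde\phi(\bx)$ supplied by the preceding proposition, which provides the local estimate $\|\tilde\phi\|_{\plainW{2,\infty}(B(\bx_0,r))}\lesssim \|\psi\|_{\plainL\infty(B(\bx_0,r'))}$ for $r<r'<R$, and hence, via \eqref{eq:grpsi}, control by $\|\psi\|_{\plainL2(B(\bx_0,R))}$. Since the multiindex $m$ in the lemma differentiates only in the final variable $x = x_N$, the failure of $F_3$ to lie in $\plainW{2,\infty}$ globally does not obstruct the argument: it would be enough to verify that $\p_x^\alpha F_3 \in \plainL\infty(\R^{3N})$ for every $\alpha\in\mathbb N_0^3$ with $|\alpha|\le 2$, and then invoke the Leibniz rule
\begin{equation*}
\p_x^m \phi = \sum_{m_1+m_2 = m}\binom{m}{m_1}\,\p_x^{m_1}(e^{F_3})\,\p_x^{m_2}\tilde\phi,
\end{equation*}
together with the identity that $\p_x^{m_1}(e^{F_3})$ equals $e^{F_3}$ times a polynomial in $\{\p_x^\beta F_3:|\beta|\le |m_1|\}$.

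My first step would be to isolate the $x$-dependence of $F_3$. Inspection of its definition reveals that only summands with index $k=N$ contribute, so
\begin{equation*}
F_3(\bx) = c_0 Z \sum_{j=1}^{N-1} (x_j\cdot x)\, \ln(|x_j|^2+|x|^2)\,\t(x_j)\t(x) + G(\hat\bx),
\end{equation*}
where $c_0 = (2-\pi)/(12\pi)$ and $G$ does not depend on $x$. It would then suffice to prove that each summand $H_j(\bx) := (x_j\cdot x)\ln(|x_j|^2+|x|^2)\t(x_j)\t(x)$, together with its $x$-derivatives of orders one and two, is uniformly bounded on $\R^{3N}$. Away from the coalescence set $\{x_j=x=0\}$ and outside the compact support of $\t(x_j)\t(x)$ the function $H_j$ is smooth and compactly supported, so the only delicate check is near the coalescence origin.

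A direct differentiation in $x$ will produce sums of terms of the schematic forms
\begin{equation*}
x_j^{(l)} \ln(|x_j|^2+|x|^2),\qquad \frac{x_j^{(i)} x^{(l)}}{|x_j|^2+|x|^2},\qquad \frac{(x_j\cdot x)\, x^{(k)} x^{(l)}}{(|x_j|^2+|x|^2)^{2}},
\end{equation*}
multiplied by smooth bounded factors from the cutoff $\t$. The AM--GM inequality $2|x_j^{(i)} x^{(l)}|\le |x_j|^2+|x|^2$ is enough to bound the second and third expressions by absolute constants, and the elementary limit $s^{1/2}|\ln s|\to 0$ as $s\to 0^+$, applied with $s = |x_j|^2+|x|^2$ and $|x_j^{(l)}|\le s^{1/2}$, handles the first. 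This would yield $\p_x^\alpha F_3\in\plainL\infty(\R^{3N})$ for $|\alpha|\le 2$, and the Leibniz formula combined with the $\plainW{2,\infty}$-control of $\tilde\phi$ from the preceding proposition would complete the proof of \eqref{eq:nabla2phi}.

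The main obstacle will be the careful verification in the preceding paragraph: one must rule out any residual logarithmic blow-up or inverse-power singularity that could survive two $x$-derivatives. The saving structural feature is that the numerator $x_j\cdot x$ in each summand of $F_3$ vanishes at the origin at exactly the rate needed to compensate for the singularities of the denominators $|x_j|^2+|x|^2$ produced by differentiation, so that after all cancellations only a bounded logarithmic factor remains and is itself tamed by a linear vanishing factor.
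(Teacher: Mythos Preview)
Your proposal is correct and follows the same route as the paper: write $\phi = e^{F_3}\tilde\phi$, observe that the $x$-derivatives of $F_3$ up to order two are uniformly bounded, apply the Leibniz rule together with Proposition~\ref{prop:f3}, and finish with \eqref{eq:grpsi}. The paper's proof is terser in that it simply asserts the boundedness of $\p_x^\alpha F_3$ for $|\alpha|\le 2$ without the explicit verification you sketch, but the argument is the same.
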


\begin{proof}
The derivatives of the function $F_3(\hat\bx, x)$ with respect to $x$ of first and second order 
are uniformly bounded. Therefore, by Proposition \ref{prop:f3},  
\begin{align*}
\|\p_{x}^m \phi\|_{\plainL\infty((B(\bx_0, r))} = \|\p_{x}^m\big(e^{F_3}\tilde\phi\big)\|_{\plainL{\infty}(B(\bx_0, r))}\lesssim \| \psi\|_{\plainL\infty(B(\bx_0, R))},
\end{align*}
and the bound \eqref{eq:nabla2phi} follows from \eqref{eq:grpsi}. 
\end{proof}

 As a step towards Theorem \ref{thm:varphider}, relying on the above estimates, we  
obtain bounds for higher order derivatives for $\phi$. For the first time 
these bounds were written out in \cite[Proposition 2.1]{Hearnshaw2024}.

\begin{thm}\label{thm:regphi} 
For all 
$\bx\notin\Sigma$, and for 
arbitrary $R >0$, we have the bound 
\begin{align}\label{eq:regphi}
|\p_{x}^m \,\phi(\bx)|
\lesssim \big(1+ \l(\bx)^{2-|m|}\big) \, \|\psi\|_{\plainL2(B(\bx, R))},
\quad m\in\mathbb N_0^{3},
\end{align}
with constants independent of $\psi$ and $\bx\notin\Sigma$. 
\end{thm}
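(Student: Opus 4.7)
The plan is to establish \eqref{eq:regphi} by induction on $|m|$, exploiting the elliptic PDE \eqref{eq:jastrow} satisfied by $\phi$ together with a rescaling argument around points $\bx_0 \notin \Sigma$. For $|m|\le 2$ the estimate is an immediate consequence of \eqref{eq:grphi} and Lemma \ref{lem:f3}, because $\l(\bx)\le 1$ forces $1+\l(\bx)^{2-|m|}\ge 1$ and the right-hand side of \eqref{eq:regphi} simply absorbs $\|\psi\|_{\plainL2(B(\bx, R))}$. The substantive case is $|m|\ge 3$.

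I would proceed as follows. Fix $\bx_0\notin\Sigma$ and set $d := \dc(\bx_0)$. If $d\ge 1$, the point $\bx_0$ is uniformly away from $\Sigma$ and the claim follows from standard interior analytic regularity for \eqref{eq:jastrow} on a unit-sized ball about $\bx_0$. The interesting case is $d<1$, where I would rescale: put $\tilde\phi(\by):=\phi(\bx_0+d\by)$ for $\by\in B(0,1/2)$. By \eqref{eq:lip} this ball lies at distance $\ge d/2$ from $\Sigma$. Substituting into \eqref{eq:jastrow} and using the pointwise bounds $|\partial_\bx^k \nabla F|\lesssim d^{-|k|}$ (together with the smoothness of $V-\Delta F$ and analogous bounds on derivatives of $|\nabla F|^2$ on $B(\bx_0, d/2)$), one finds that $\tilde\phi$ satisfies
\[
-\Delta_\by\tilde\phi + A(\by)\cdot\nabla_\by\tilde\phi + B(\by)\tilde\phi = 0, \qquad \by\in B(0,1/2),
\]
where $A = O(d)$ and $B = O(d^2)$ in $\plainL\infty$, and every $\by$-derivative of $A$ and $B$ is bounded uniformly in $\bx_0$ and $d$.

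The theorem then reduces to the rescaled bound
\[
\|\partial_\by^m\tilde\phi\|_{\plainL\infty(B(0, r_{|m|}))}\lesssim d^2\, \|\psi\|_{\plainL2(B(\bx_0, R))}, \qquad |m|\ge 2,
\]
with radii $r_{|m|}>0$ and implicit constants depending only on $|m|$; unscaling then delivers \eqref{eq:regphi}. The base case $|m|=2$ of this rescaled bound is exactly Lemma \ref{lem:f3} combined with the chain-rule identity $\partial_\by^m = d^{|m|}\partial_\bx^m$. For the inductive step I would decompose $m=\alpha+\beta$ with $|\alpha|=2$, set $\Psi := \partial_\by^\alpha\tilde\phi$, and differentiate the rescaled PDE $\partial_\by^\alpha$ times to obtain an elliptic equation
\[
-\Delta_\by\Psi + A\cdot\nabla_\by\Psi + B\Psi = \mathcal R_\alpha,
\]
where $\mathcal R_\alpha$ is a linear combination of terms involving (uniformly bounded) derivatives of $A$ and $B$ multiplied by derivatives of $\tilde\phi$ of order at most two (which are already bounded by $d^2\|\psi\|_{\plainL2}$ via Lemma \ref{lem:f3}). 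Iterated interior Schauder estimates applied to $\Psi$ on slightly shrinking balls then control $\|\partial_\by^{|m|-2}\Psi\|_{\plainL\infty}=\|\partial_\by^m\tilde\phi\|_{\plainL\infty}$ in terms of $\|\Psi\|_{\plainL\infty}$ and a $C^{|m|-4}$-norm of $\mathcal R_\alpha$, the latter being handled in turn by the inductive hypothesis applied to derivatives of $\tilde\phi$ of order $\le |m|-1$.

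The main obstacle is the bookkeeping of $d$-factors: one must verify that the $d^2$-improvement from Lemma \ref{lem:f3} survives the Schauder bootstrap rather than being diluted back to the coarser $\|\psi\|_{\plainL2}$-bound. This is guaranteed by the explicit $d$-smallness of $A$ and $d^2$-smallness of $B$, which precisely compensate the ``missing'' powers of $d$ whenever a derivative of $\tilde\phi$ appears in $\mathcal R_\alpha$ (or in its $\by$-derivatives) with insufficient $d$-weight. This mechanism mirrors the refinement of the $W^{1,\infty}$-based scaling argument of \cite{FS2021} carried out in \cite{HearnSob2023} and \cite{Hearnshaw2024}, with Lemma \ref{lem:f3} playing the role of the enhanced base-case input in place of the gradient bound \eqref{eq:grpsi} used to prove Proposition \ref{prop:FS}.
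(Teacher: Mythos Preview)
Your strategy coincides with the paper's: both reduce to a rescaled elliptic estimate that the paper packages as Proposition~\ref{prop:reg} (quoted from \cite{HearnSob2023}), with Lemma~\ref{lem:f3} supplying the crucial second-order input. The paper simply cites this proposition, verifies its hypotheses via Lemma~\ref{lem:nablajas}, and then combines \eqref{eq:d2ell} with \eqref{eq:grphi} and \eqref{eq:nabla2phi}; you instead sketch the proof of the proposition itself.

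One point in your sketch needs correction. You assert that ``every $\by$-derivative of $A$ and $B$ is bounded uniformly'' and then invoke ``interior Schauder estimates'' (and, for $d\ge 1$, ``standard interior analytic regularity''). This is not right as stated: the ball $B(\bx_0,d/2)$ may contain points with $x_j=0$ or $x_j=x_k$ for $j,k<N$ (none of these lie in $\Sigma$), and near such points $\nabla_\bx F$ is discontinuous in the $x_j$-direction. Hence $A$ need not be $\plainC{0,\mu}$ in the full $\by$-variable, and classical Schauder does not apply. What \emph{is} true is that $\partial_{y_N}^k A$ and $\partial_{y_N}^k B$ are uniformly bounded for every $k$ --- this is exactly condition \eqref{eq:cd} --- so the correct bootstrap differentiates the equation only in $y_N$ and at each step applies $\plainW{2,p}$/$\plainC{1,\mu}$ theory (which requires only $\plainL\infty$ coefficients) to recover one full gradient $\nabla_\by$. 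Your closing reference to \cite{HearnSob2023} points to precisely this mechanism, so the outline is salvageable; but the middle of the sketch misstates what regularity is available and which elliptic estimate actually does the work.
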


The bound of the form \eqref{eq:regphi} with $\l(\bx)^{1-|m|}$ 
instead of 
$\l(\bx)^{2-|m|}$ was used in  \cite{FS2021} 
to derive \eqref{eq:FS}.

For the sake of completeness we provide a proof of Theorem \ref{thm:regphi}.

\subsection{Proof of Theorem \ref{thm:regphi}} 
First we quote a more general result that serves as a basis for the proof.

Let $u\in\plainW{1,2}$ be a weak solution of the equation 
\begin{align}\label{eq:mod}
-\Delta u + \ba\cdot\nabla u + b u = 0
\end{align}
in the ball $B(\bx_0, R\ell)\subset \R^{3N}$ 
with some $\bx_0\in \R^{3N}$, $0< \ell\le 1$, and 
a fixed $R>0$. Suppose that 
\begin{align}\label{eq:cd}
|\p_x^{m} \ba(\hat\bx, x)| + 
|\p_x^m b(\hat\bx, x)|\lesssim \ell^{-|m|}, \quad \bx\in B(\bx_0, R\ell),
\end{align}
for all $m\in\mathbb N_0^3$, with 
constants potentially depending on $m$, $R$, but not on $\bx_0$ and $\ell$. 
The next proposition that follows from  
\cite[Theorem 3.2]{HearnSob2023}, 
contains bounds for the  derivatives $\p_x^{m} u$  
with explicit dependence on the parameter $\ell\in (0, 1]$. 

\begin{prop}\label{prop:reg} 
Assume the conditions \eqref{eq:cd}, and let $u$ be a weak solution of the equation 
\eqref{eq:mod} in $B(\bx_0, R\ell)$.  
Then for all $m\in\mathbb N_0^3$ and all $r<R$ 
the derivatives $\p_x^m u$ belong to $\plainC{1, \mu}\big(\overline{B(\bx_0, r\ell)}\big)$ 
for all $\mu\in (0, 1)$. 
Furthermore, if $|m|\ge 2$, then   
\begin{align}\label{eq:d2ell}
\|\p_x^m u&\|_{\plainL\infty(B(\bx_0, r\ell))}\notag\\
\lesssim &\ \ell^{2-|m|}\,\big(
\max_{k: |k|=2} \|\p_x^k u\|_{\plainL\infty(B(\bx_0, R\ell))}
+ \|\nabla u\|_{\plainL\infty(B(\bx_0, R\ell))}
+  \|u\|_{\plainL\infty(B(\bx_0, R\ell))}\big).
\end{align}
The implicit constants in \eqref{eq:d2ell} are independent of $\bx_0$ and $\ell$, but may depend 
on the constants $r, R$, order $m$ of the derivative, and the 
constants in \eqref{eq:cd}. 
 \end{prop}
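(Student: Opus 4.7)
The plan is to rescale to a unit-ball problem, apply interior elliptic regularity to the rescaled equation while carefully tracking how the coefficient smallness depends on $\ell$, and then undo the rescaling.

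First, for $\by\in B(0,R)$ define $v(\by) = u(\bx_0+\ell\by)$. Then $v$ is a weak solution on $B(0,R)$ of
$$-\Delta_\by v + \tilde\ba(\by)\cdot\nabla_\by v + \tilde b(\by)\,v = 0,\qquad \tilde\ba(\by) := \ell\,\ba(\bx_0+\ell\by),\ \ \tilde b(\by) := \ell^2\, b(\bx_0+\ell\by).$$
By the chain rule and \eqref{eq:cd}, every derivative of the rescaled coefficients satisfies $|\p_\by^m\tilde\ba(\by)|\lesssim \ell\le 1$ and $|\p_\by^m \tilde b(\by)|\lesssim \ell^2\le 1$, uniformly in $\bx_0\in\R^{3N}$ and $\ell\in(0,1]$. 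Thus the rescaled problem is uniformly elliptic on a fixed ball, with smooth coefficients whose $\plainC{k}$-norms on $B(0,R)$ are bounded by a constant independent of $\ell$.

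Next, standard interior elliptic regularity (e.g.\ Schauder estimates iterated on a finite chain of nested balls, or difference quotients followed by Sobolev embedding) applied to this equation yields $v\in\plainC{k,\mu}_{\loc}(B(0,R))$ for every $k\in\mathbb N_0$ and every $\mu\in(0,1)$. To obtain the sharp $\ell$-dependence in the quantitative estimate, rewrite the equation as $\Delta v = \tilde\ba\cdot\nabla v + \tilde b v$; the $\plainC{k,\mu}$-norm of the right-hand side on interior subdomains is controlled by $\ell\,\|v\|_{\plainC{k+1}} + \ell^2\,\|v\|_{\plainC{k}}$. Iterating the interior Schauder inequality for the Poisson equation then gives, by finite induction on $|m|\ge 2$ along a finite chain of nested balls between $B(0,r)$ and $B(0,R)$,
$$\|\p_\by^m v\|_{\plainL\infty(B(0,r))}\lesssim \max_{|k|=2}\|\p_\by^k v\|_{\plainL\infty(B(0,R))} + \ell\,\|\nabla_\by v\|_{\plainL\infty(B(0,R))} + \ell^2\,\|v\|_{\plainL\infty(B(0,R))},$$
with constants independent of $\ell$. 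The factors $\ell$ and $\ell^2$ on the right-hand side arise precisely because the smallness $|\tilde\ba|\lesssim \ell$, $|\tilde b|\lesssim \ell^2$ of the rescaled coefficients survives every step of the bootstrap.

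Finally, undo the rescaling using the identity $\p_\by^m v(\by) = \ell^{|m|}(\p_x^m u)(\bx_0+\ell\by)$, which yields $\|\p_\by^k v\|_{\plainL\infty(B(0,\rho))} = \ell^{|k|}\|\p_x^k u\|_{\plainL\infty(B(\bx_0,\rho\ell))}$ for every $k$ and $\rho\le R$. Substituting into the preceding display converts it into
$$\ell^{|m|}\|\p_x^m u\|_{\plainL\infty(B(\bx_0,r\ell))}\lesssim \ell^2\!\!\max_{|k|=2}\!\|\p_x^k u\|_{\plainL\infty(B(\bx_0,R\ell))} + \ell^2\|\nabla_x u\|_{\plainL\infty(B(\bx_0,R\ell))} + \ell^2\|u\|_{\plainL\infty(B(\bx_0,R\ell))},$$
and dividing by $\ell^{|m|}$ produces exactly \eqref{eq:d2ell}. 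The $\plainC{1,\mu}$-regularity claim for each $\p_x^m u$ follows from the $\plainC{k,\mu}$-regularity of $v$ obtained above. The main obstacle lies in the second step: arranging the Schauder bootstrap so that the final estimate carries precisely the factors $1,\ \ell,\ \ell^2$ against $\max_{|k|=2}\|\p^k v\|$, $\|\nabla v\|$, $\|v\|$ respectively (rather than intermediate powers, which would give a strictly weaker bound after rescaling). This careful accounting of how the coefficient smallness propagates through each iteration is exactly what is carried out in \cite[Theorem 3.2]{HearnSob2023}.
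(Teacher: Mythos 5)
Your strategy matches the one actually used: the paper proves Proposition \ref{prop:reg} by directly citing \cite[Theorem 3.2]{HearnSob2023}, and your proposal reconstructs what lies behind that citation by rescaling $v(\by) = u(\bx_0 + \ell\by)$, exploiting the smallness $|\tilde\ba|\lesssim\ell$, $|\tilde b|\lesssim\ell^2$ of the rescaled coefficients, and unscaling. The rescaling algebra is correct (the chain rule gives $|\p_\by^m\tilde\ba|\lesssim\ell$, $|\p_\by^m\tilde b|\lesssim\ell^2$ uniformly in $m$, $\bx_0$, $\ell$), your rescaled intermediate bound is the right one, and the unscaling step does indeed produce \eqref{eq:d2ell} exactly. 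You are candid that the genuinely hard part — arranging the bootstrap so that the factors $1$, $\ell$, $\ell^2$ land in front of $\max_{|k|=2}\|\p^k v\|$, $\|\nabla v\|$, $\|v\|$ respectively — is what \cite{HearnSob2023} carries out, and this is precisely the step the paper also delegates.

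One technical point is worth flagging in your sketch. If you apply the interior Schauder estimate $\|w\|_{\plainC{2,\mu}(B_r)}\lesssim\|\Delta w\|_{\plainC{0,\mu}(B_\rho)}+\|w\|_{\plainL\infty(B_\rho)}$ with $w=v$ or $w=\p_j v$, the zeroth-order term $\|w\|_{\plainL\infty}$ on the right appears \emph{without} any $\ell$-factor, which produces $\|v\|_{\plainL\infty}$ (resp. $\|\nabla v\|_{\plainL\infty}$) with the wrong power of $\ell$ and would, after unscaling, yield $\ell^{-|m|}\|u\|$ (resp. $\ell^{1-|m|}\|\nabla u\|$) instead of $\ell^{2-|m|}$. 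The fix is to differentiate the equation twice first and apply the interior estimates to $w=\p^k v$ with $|k|=2$: then the intrinsic zeroth-order term $\|w\|_{\plainL\infty}$ is already $\max_{|k|=2}\|\p^k v\|_{\plainL\infty}$ — the dominant term in the target — while all the remaining terms, coming from Leibniz derivatives of $\tilde\ba\cdot\nabla v+\tilde b v$, carry the factors $\ell$ or $\ell^2$. The resulting $\ell\|\nabla^{|m|}v\|_{\plainL\infty}$-type term on a slightly larger ball is absorbed by the usual nested-ball (or interpolation) trick, which you allude to with ``finite induction along a finite chain of nested balls.'' With that clarification your argument is complete modulo the reference, which is the same level of rigor the paper itself provides here.
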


We apply Proposition \ref{prop:reg} to the equation \eqref{eq:jastrow} satisfied by the function 
$\phi$. 
In order to check that 
the coefficients of \eqref{eq:jastrow} satisfy 
the conditions \eqref{eq:cd}  we start with estimates of  
the Jastrow factor. 
Along with the distance function $\dc(\bx)$ defined in \eqref{eq:dq} introduce also 
\begin{align*}
\dc_1(\bx) = \min\{ \frac{1}{\sqrt2}\,|x-x_k|: 1\le k\le N-1\}, \quad \textup{and}\quad 
\l_1(\bx) = \min\{\dc_1(\bx), 1\}.
\end{align*}
Clearly, $\dc(\bx) = \min\{|x|, \dc_1(\bx)\}$, $\l(\bx) = \min\{|x|, \l_1(\bx)\}$.
\begin{lem}\label{lem:nablajas}
Let $F$, $F_1$ be as defined in \eqref{eq:jascut} and \eqref{eq:F1} 
respectively. Then for all 
$\bx\notin\Sigma$ and all 
$m\in\mathbb N_0^3$ we have
\begin{align}\label{eq:nablajas}
\big|\p_{x}^m \nabla F(\bx)\big| 
+ \big|\p_{x}^m |\nabla F(\bx)|^2\big|\lesssim &\ \dc(\bx)^{-|m|},\\[0.2cm]
\big|\p_{x}^m \nabla F_1(\bx)\big| 
+ \big|\p_{x}^m |\nabla F_1(\bx)|^2\big|\lesssim &\ \dc_1(\bx)^{-|m|}.\label{eq:nablajas0}
\end{align}
If $|m|\ge 1$, then  
\begin{align}\label{eq:ef}
\big|\p_{x}^m e^{F(\bx)}\big| \lesssim  \dc(\bx)^{1-|m|},\quad  
\big|\p_{x}^m e^{F_1(\bx)}\big| \lesssim  \dc_1(\bx)^{1-|m|}.
\end{align} 
\end{lem}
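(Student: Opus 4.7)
The plan is to exploit the explicit structure of $F$ and $F_1$: each is a finite sum of terms of the form $c_y\, |y|\theta(y)$, where $y$ is one of the affine functions $\{x_j\}_{j=1}^N \cup \{x_j-x_k\}_{1\le j<k\le N}$. Among these building blocks, only those with $y \in \{x,\ x-x_1,\ \ldots,\ x-x_{N-1}\}$ depend on $x = x_N$ (for $F_1$ the entry $y = x$ is absent); all others are annihilated by $\p_x^m$ as soon as $|m|\ge 1$. The entire argument therefore reduces to one pointwise estimate per block.

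The first step is the elementary bound $\big|\p^m(|y|\theta(y))\big| \lesssim |y|^{1-|m|}$ for $y\neq 0$, which follows from the homogeneity of $|y|$ (giving $\big|\p^k|y|\big| \lesssim |y|^{1-k}$ away from the origin) combined with the Leibniz rule and the smoothness of $\theta$. Applying this to $\p_x^m \nabla F$ with $|m|\ge 1$, each $x$-dependent contribution is a derivative of $|y|\theta(y)$ of total order $|m|+1$ evaluated at the corresponding $y$, hence is bounded by $|y|^{-|m|}$. Using $|x|\ge \dc(\bx)$ and $|x-x_k|\ge \sqrt{2}\,\dc(\bx)$, each summand is $\lesssim \dc(\bx)^{-|m|}$; summing the finite family of $y$'s yields \eqref{eq:nablajas}. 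For $F_1$ the term $y = x$ is missing, so the same calculation gives \eqref{eq:nablajas0} with $\dc_1$ in place of $\dc$. The case $|m| = 0$ is covered directly by \eqref{eq:Finf}. The squared-gradient bound then follows from the Leibniz rule applied to $|\nabla F|^2 = \sum_{j,\alpha}(\p_{x_j^\alpha} F)^2$: each factor contributes $\dc(\bx)^{-|m_i|}$ (absorbing $|m_i|=0$ via $\nabla F\in \plainL\infty$) and the exponents add to $-|m|$.

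For the exponential bounds I would invoke Fa\`a di Bruno's formula
\[
\p_x^m e^F = e^F \sum_\pi \prod_{B\in\pi} \p_x^B F,
\]
with $\pi$ ranging over set partitions of the derivative multi-index. From the previous step, reading $\p_x^B F$ as $\p_x^{B-e}$ of a component of $\nabla F$, one has $|\p_x^B F|\lesssim \dc(\bx)^{1-|B|}$ for $|B|\ge 1$, while $e^F\in\plainL\infty$ by \eqref{eq:Finf}. Hence
\[
|\p_x^m e^F| \lesssim \sum_\pi \dc(\bx)^{|\pi|-|m|}.
\]
I would then split into three regimes: (i) $\dc(\bx) \le 1$, where the singleton partition ($|\pi|=1$) dominates and supplies the target $\dc(\bx)^{1-|m|}$; (ii) $\dc(\bx)$ larger than the support radius of $\theta$, where every $x$-derivative of $F$ vanishes and the left-hand side is identically zero; and (iii) the intermediate range, in which $|\p_x^m e^F|$ is bounded by a constant while $\dc(\bx)^{1-|m|}$ is bounded below by a positive constant. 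The same scheme with $\dc_1$ in place of $\dc$ handles $e^{F_1}$.

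The main bookkeeping issue is precisely this case split, which ensures that the Fa\`a di Bruno bound behaves uniformly across the three regimes of $\dc(\bx)$; once the single-block estimate is secured, the remaining steps are essentially mechanical.
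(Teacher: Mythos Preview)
Your proposal is correct and follows essentially the same route as the paper: reduce to the single building block $|y|\theta(y)$, derive the derivative bounds on $\nabla F$ and $|\nabla F|^2$ via Leibniz, and then handle $e^F$ by expanding its $x$-derivatives as sums of products $\p_x^{k_1}F\cdots\p_x^{k_s}F\,e^F$ (your Fa\`a di Bruno formula) bounded by $\dc(\bx)^{s-|m|}$. The only cosmetic difference is that the paper replaces your three-regime split by the single observation that $\p_x^m e^{F_1}$ is supported where $\dc_1(\bx)\lesssim 1$, which immediately forces $\dc_1^{s-|m|}\lesssim \dc_1^{1-|m|}$ since $s\ge 1$; your regimes (ii)--(iii) are just an explicit unpacking of that remark.
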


\begin{proof} 
Let $\t\in\plainC\infty_0(\R^3)$ be as in definition \eqref{eq:F0}, and let 
$g(x) = \nabla \big(|x| \t(x)\big)$. Then 
\begin{align*}
|\p_{x}^m g(x)| + |\p_{x}^m g(x-x_k)|\lesssim &\ \dc(\bx)^{-|m|},\notag\\
\quad  &\ |\p_{x}^m g(x-x_k)|\lesssim \dc_1(\bx)^{-|m|},\quad k = 1, 2, \dots, N-1,
\end{align*}
for all $\bx\notin \Sigma$. This entails the required bounds $\nabla F$ and $\nabla F_1$. 
For $|\nabla F|^2, |\nabla F_1|^2$ 
we use the Leibniz rule, which leads again to \eqref{eq:nablajas}, \eqref{eq:nablajas0}.

To prove the bounds \eqref{eq:ef} consider, for example, 
the derivative $\p_{x}^m e^{F_1}, |m|\ge 1$. The case of the function $F$ is done in the same way. 
Observe that $\p_{x}^m e^{F_1}, |m|\ge 1,$ 
can be written as a linear combination of finitely many terms of the form 
\begin{align*}
 \p_{x}^{k_1}F_1 \, 
 \p_{x}^{k_2}F_1 \cdots 
 \p_{x}^{k_s}F_1 \, e^{F_1},
\end{align*}
where $1 \le |k_j|$, $\sum_{j=1}^s |k_j| = |m|$, $1\le s\le |m|$.
By \eqref{eq:nablajas0} each such term can be estimated by 
\begin{align}\label{eq:dj}
\dc_1(\bx)^{(1-|k_1|) + (1-|k_2|) + \cdots (1-|k_s|) } 
= \dc_1(\bx)^{s - |m|}.
\end{align}
Due to the presence of the cut-off $\t$ in the definition \eqref{eq:F1}, the derivative 
$\p_{x}^m e^{F_1}$ is supported on the set where $\dc_1(\bx)\lesssim 1$. 
Therefore the right-hand side of \eqref{eq:dj} 
does not exceed $\dc_1(\bx)^{1-|m|}$, as required.
\end{proof}

Now we are in a position to prove Theorem \ref{thm:regphi}. 

\begin{proof}[Proof of Theorem \ref{thm:regphi}] 
For $|m| = 0, 1$ the bound \eqref{eq:regphi} holds because of \eqref{eq:grphi}. From now on assume that $|m|\ge 2$.

Further proof follows the idea of \cite[Sect. 2]{FS2021}. 
Let us fix 
$\bx_0\in \R^{3N}\setminus\Sigma$, and denote $\l = \l(\bx_0)$. 
Then for any $R_1\in (0, 1)$ we have   
$B(\bx_0, R_1\l)\subset \R^{3N}\setminus \Sigma$.  
Indeed, by \eqref{eq:lip},  
\begin{align*}
|\l(\bx) - \l|\le |\bx-\bx_0|<R_1\l,\quad \bx\in B(\bx_0, R_1\l),
\end{align*}
which implies that 
\begin{align}\label{eq:lip2}
0 <(1-R_1)\l \le \l(\bx)\le (1+R_1)\l,\quad \textup{for all}\quad \bx\in B(\bx_0, R_1\l),
\end{align} 
and hence proves the claim. 

By \eqref{eq:jastrow}, the function $\phi = e^{-F}\psi$ 
satisfies the equation \eqref{eq:mod} where 
\begin{align*}
\ba = -2\nabla F,\quad b = V- \Delta F - |\nabla F|^2 - E.
\end{align*}
Consider this equation in the ball $B(\bx_0, R_1\l)$ where  $R_1 < \min\{1, R\}$.
%
%
%
%
Since 
$B(\bx_0, R_1\l)\subset\R^{3N}\setminus\Sigma$, 
by virtue of \eqref{eq:Finf}, \eqref{eq:nablajas} and \eqref{eq:lip2}, 
the coefficients $\ba$ and $b$ 
satisfy the bound 
\begin{align*}
|\p_x^m\ba(\bx)| + |\p_x^m b(\bx)|\lesssim 1+ \dc(\bx)^{-|m|} 
\lesssim \l^{-|m|},\quad \textup{for all}\quad \bx\in B(\bx_0, R_1\l).
\end{align*}
Thus the condition \eqref{eq:cd} is fulfilled 
 with $\ell = \l\le 1$. By Proposition \ref{prop:reg},  
if $|m|\ge 2$, then  
\begin{align*}
|\p_x^{m}\phi(\bx_0)|\lesssim  \l^{2-|m|}\, \big(&
\max_{l: |l|=2} \|\p_x^l \phi\|_{\plainL\infty(B(\bx_0, R_1\l))}\notag\\
 &\ + 
\|\phi\|_{\plainL\infty(B(\bx_0, R_1\l))}
+ \|\nabla \phi\|_{\plainL\infty(B(\bx_0, R_1\l))}\big).
\end{align*}
Now the bound \eqref{eq:regphi} for $|m|\ge 2$ 
follows from \eqref{eq:grphi} and  \eqref{eq:nabla2phi} since $R_1\l\le R_1< R$. 
The proof is now complete. 
\end{proof}

\subsection{Proof of Theorem \ref{thm:varphider}}
For $|m| = 0, 1$ the required bound follows from \eqref{eq:grpsi}.  

Assume that \eqref{eq:van} holds and that $|m|\ge 2$.  
Since $e^{-F_0} \psi = e^{F_1}\phi$, 
we can write
\begin{align}\label{eq:f0f1}
\p_x^m\big(e^{-F_0} \psi\big) 
= 
\sum_{0\le k < m} {m\choose{k}} \p_x^k e^{F_1}\, \p_x^{m-k}\, \phi
+ \big(\p_x^m e^{F_1}\big)\, \phi
\end{align}
By \eqref{eq:regphi} and \eqref{eq:ef}, each term under the sum is bounded by 
\begin{align}\label{eq:summa}
\big(1+\l_1(\bx)^{1-|k|}\big) \, \big(1+ &\ \l(\bx)^{2-|m|+|k|}\big)
\,\|\psi\|_{\plainL2(B(\bx, R))}\notag\\
&\ \lesssim  
\big(1+\l(\bx)^{2-|m|}\big)\,\|\psi\|_{\plainL2(B(\bx, R))},\quad 0\le k < m,
\end{align}
where we have used that $\l(\bx)\le \l_1(\bx)\le 1$ and that $|k|+1\le |m|$.

To estimate the last term in \eqref{eq:f0f1} assume without loss of generality that 
$R \le 1$ and note that for $\l_1\ge {R}/{2}$ we have 
\begin{align*}
|\big(\p_x^m e^{F_1}\big)\, \phi|\lesssim \|\psi\|_{\plainL2(B(\bx, R))} 
\lesssim R^{|m|-2}\l_1(\bx)^{2-|m|} \, \|\psi\|_{\plainL2(B(\bx, R))}, 
\end{align*} 
by \eqref{eq:ef} and \eqref{eq:grphi}.
It remains to consider the case $\l_1(\bx) \le R/2$. Assume that 
$\l_1(\bx) = \frac{1}{\sqrt2}\, |x-x_j|$ with some $j = 1, 2, \dots, N-1$.
By \eqref{eq:van} we also have $\phi(\hat\bx, x_j) = 0$, so 
\begin{align*}
|\phi(\hat\bx, x)|  = &\ |\phi(\hat\bx, x) - \phi(\hat\bx, x_j)|\\
\le &\ \|\nabla \phi\|_{\plainL\infty(B(\bx, 3R/4))} |x-x_j|
\lesssim \sqrt 2\, \l_1(\bx)\, \|\psi\|_{\plainL2(B(\bx, R))},
\end{align*}
where we have used \eqref{eq:grphi}. Now, using also \eqref{eq:ef}, we obtain that 
\begin{align*}
|(\p_x^m e^{F_1(\bx)}) \phi(\bx)|
\lesssim &\ \l_1(\bx)^{2-|m|}\,  \|\psi\|_{\plainL2(B(\bx, R))}
\lesssim \l(\bx)^{2-|m|}\,  \|\psi\|_{\plainL2(B(\bx, R))}.
\end{align*}
Together with \eqref{eq:summa} the last bound leads to \eqref{eq:varphider}.
\qed
 
Note that without the assumption \eqref{eq:van}, 
repeating the above proof for the function $\psi$ instead of $e^{-F_0}\psi$, would give 
a proof of Proposition \ref{prop:FS}. 
%

\section{Compact operators} \label{sect:compact} 
Here we provide necessary information about compact operators. 
Most of the listed facts can be found in \cite[Chapter 11]{BS}. 
Let $\CH$ and $\mathcal G$ be separable Hilbert spaces.  
Let $A:\CH\to\mathcal G$ be a compact operator. 
If $\CH = \mathcal G$ and $A=A^*\ge 0$, then $\l_k(A)$, $k= 1, 2, \dots$, 
denote the positive eigenvalues of $A$ 
numbered in descending order counting multiplicity. 
For arbitrary spaces $\CH$, $\mathcal G$ and compact $A$, by $s_k(A) >0$, 
$k= 1, 2, \dots$, we denote the singular values of 
$A$ defined by $s_k(A)^2 = \l_k(A^*A) = \l_k(AA^*)$.   
We use the notation $\BS_p, p>0$ for the classical Schatten-von Neumann classes, 
i.e. the classes of compact operators  $A$ such that 
\begin{align*}
\| A\|_p := \bigg[\sum_{k=1}^\infty \, s_k(A)^p\bigg]^{\frac{1}{p}} < \infty. 
\end{align*}
Recall that $\|A\|_p$ defines a norm (resp. quasi-norm) if $p\ge 1$ (resp. $p <1$).
It is clear that $s_k(A)\le k^{-1/p} \|A\|_p$. The following estimate from 
\cite[Chapter 11, Theorem 4.7]{BS} will be useful.

\begin{prop}\label{prop:fr}
Let $A\in \BS_p, 0 < p <\infty$ and ${\rm rank}\, K\le n$. Then 
\begin{align*}
\sum_{k=n+1}^\infty s_k(A)^p \le \|A-K\|_p^p.
\end{align*}
In particular, 
\begin{align*}
s_{2n}(A)\le n^{-\frac{1}{p}}\,\| A - K\|_p. 
\end{align*}
\end{prop}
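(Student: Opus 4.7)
The plan is to reduce the statement to the classical Ky Fan-type inequality for singular values
\begin{align*}
s_{i+j-1}(X+Y)\le s_i(X) + s_j(Y),\quad i, j\ge 1,
\end{align*}
which holds for any pair of compact operators between separable Hilbert spaces (see e.g. \cite[Chapter 11]{BS}). Once this is at our disposal, the argument is short and does not involve any hard analysis.

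First I would apply the Ky Fan inequality with $X = A-K$, $Y = K$, $i = k$ and $j = n+1$. Since $\mathrm{rank}\, K\le n$, we have $s_{n+1}(K) = 0$, and hence
\begin{align*}
s_{k+n}(A) = s_{k+n}\big((A-K)+K\big)\le s_k(A-K) + s_{n+1}(K) = s_k(A-K),\quad k\ge 1.
\end{align*}
Raising to the $p$-th power and summing in $k$ from $1$ to $\infty$ gives
\begin{align*}
\sum_{k=n+1}^\infty s_k(A)^p = \sum_{k=1}^\infty s_{k+n}(A)^p \le \sum_{k=1}^\infty s_k(A-K)^p = \|A-K\|_p^p,
\end{align*}
which is the first assertion.

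For the second assertion I would exploit the monotonicity of the singular values. Since $s_k(A)$ is non-increasing in $k$, we have
\begin{align*}
n\, s_{2n}(A)^p \le \sum_{k=n+1}^{2n} s_k(A)^p \le \sum_{k=n+1}^\infty s_k(A)^p \le \|A-K\|_p^p,
\end{align*}
and taking the $p$-th root yields $s_{2n}(A)\le n^{-1/p}\|A-K\|_p$.

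There is no genuine obstacle here: the only nontrivial input is the Ky Fan inequality itself, which is classical and is already part of the background cited in \cite[Chapter 11]{BS}. The rest is bookkeeping with monotone sequences.
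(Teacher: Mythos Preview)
Your proof is correct. The paper does not actually prove this proposition but simply cites it as \cite[Chapter 11, Theorem 4.7]{BS}; your argument via the Ky Fan inequality $s_{i+j-1}(X+Y)\le s_i(X)+s_j(Y)$ is exactly the standard derivation of that result, so there is nothing to compare.
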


If $s_k(A)\lesssim k^{-1/p}, k = 1, 2, \dots$, 
with some $p >0$, then we say that $A\in \BS_{p, \infty}$ and denote
\begin{align*}
\| A\|_{p, \infty} = \sup_k s_k(A) k^{\frac{1}{p}},
\end{align*}
so that 
\begin{align}\label{eq:star}
\|A^*A\|_{p/2, \infty} = \|A A^*\|_{p/2, \infty} = \|A\|_{p, \infty}^2. 
\end{align}
Clearly, $\BS_p\subset \BS_{p, \infty}$ and $\|A\|_{p, \infty}\le \|A\|_p$.
The class $\BS_{p, \infty}$ is a complete linear space with the quasi-norm $\|A\|_{p, \infty}$. 
For all $p>0$ the functional $\|A\|_{p, \infty}$ 
satisfies the following ``triangle" inequality for  
operators $A_1, A_2\in\BS_{p, \infty}$:
\begin{align}\label{eq:triangle}
\|A_1+A_2\|_{\scalet{p}, \scalel{\infty}}^{{\frac{\scalel{p}}{\scalet{p+1}}}}
\le \|A_1\|_{\scalet{p}, \scalel{\infty}}^{{\frac{\scalel{p}}{\scalet{p+1}}}}
+ \|A_2\|_{\scalet{p, \infty}}^{{\frac{\scalel{p}}{\scalet{p+1}}}}.
\end{align}  
This inequality allows one to estimate quasi-norms of ``block-vector" operators. 
Let $A_j\in\BS_{p, \infty}$ 
be a finite collection of compact operators. Define the operator $\BA:\CH\to\oplus_j \mathcal G$ by  
$\BA = \{A_j\}_{j}$.
Since $\BA^* \BA = \sum_j A_j^*A_j$, 
and $A_j^*A_j\in\BS_{q, \infty}$, $q = p/2$, 
by \eqref{eq:triangle} we have 
\begin{align}\label{eq:blockvec}
\|\BA\|_{p, \infty}^{{\frac{\scalel{2p}}{\scalet{p+2}}}} 
= \|\BA^*\BA\|_{q, \infty}^{{\frac{\scalel{q}}{\scalet{q+1}}}}
\le  \sum_j\|A_j^*A_j\|_{q, \infty}^{{\frac{\scalel{q}}{\scalet{q+1}}}} = 
\sum_j\|A_j\|_{p, \infty}^{{\frac{\scalel{2p}}{\scalet{p+2}}}}.
\end{align}
Consequently, in order to estimate the singular values of $\BA$ it suffices to 
estimate those of its components $A_j$. We use this fact throughout the paper. 
%
 
For $p\in (0, 1)$ it is often more convenient to use the following ``triangle" inequality for  
arbitrarily many operators $A_j\in\BS_{p, \infty}$, $j = 1, 2, \dots$:
\begin{align}\label{eq:ptriangle}
\big\|\sum_{j} A_j\big\|_{p, \infty}^p\le (1-p)^{-1}
\sum_j \|A_j\|_{p, \infty}^p,
\end{align}
see 
\cite[Lemmata 7.5, 7.6]{AJPR2002}, \cite[\S 1]{BS1977} and references therein. Under 
the additional assumption
\begin{align}\label{eq:orthog}
A_k A_j^* = 0\quad \textup{or}\quad A_k^*A_j = 0,\quad j\not = k, 
\end{align}   
the inequality of the form \eqref{eq:ptriangle} extends to all $p\in (0, 2)$: 

\begin{prop}\cite[Lemma 1.1]{BS1977}\label{prop:orthog}
Let $p\in (0, 2)$ and let $A_j\in \BS_{p, \infty}$, $j= 1, 2, \dots$, 
be a family of operators satisfying 
\eqref{eq:orthog}. 
Then for the operator $A = \sum_j A_j$ we have the inequality
\begin{align}\label{eq:pnorm}
\|A\|_{\scalet{p}, \scalel{\infty}}^{\scalet{p}}\le 
\frac{2}{2-p}\,
\sum_j \|A_j\|_{\scalet{p}, \scalel{\infty}}^{\scalet{p}}. 
\end{align} 
\end{prop}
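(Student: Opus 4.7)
The plan is to reduce the claim to the already-stated inequality \eqref{eq:ptriangle} (which is for $p\in(0,1)$) by squaring the exponent via the identity \eqref{eq:star}. The orthogonality hypothesis \eqref{eq:orthog} is exactly what eliminates the cross terms that would otherwise prevent such a reduction.

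First I would observe that, depending on which branch of \eqref{eq:orthog} holds, either $AA^*$ or $A^*A$ collapses onto its diagonal. For instance, if $A_k^* A_j = 0$ for $j\neq k$, then formally
\begin{align*}
A^*A = \sum_{j,k} A_j^* A_k = \sum_j A_j^* A_j,
\end{align*}
and symmetrically $AA^* = \sum_j A_j A_j^*$ under the other orthogonality option. (If the family is infinite one first verifies this on finite partial sums and then passes to the limit; the point is that the $A_j^*A_j$ are mutually orthogonal positive operators on orthogonal subspaces, so no interference occurs.)

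Next, set $q=p/2$, so that $q\in(0,1)$ since $p\in(0,2)$, and apply \eqref{eq:ptriangle} to the family $\{A_j^*A_j\}\subset \BS_{q,\infty}$:
\begin{align*}
\bigl\|A^*A\bigr\|_{q,\infty}^{q}\le (1-q)^{-1}\sum_j \|A_j^*A_j\|_{q,\infty}^{q}.
\end{align*}
By \eqref{eq:star} applied on both sides, $\|A^*A\|_{q,\infty}=\|A\|_{p,\infty}^{2}$ and $\|A_j^*A_j\|_{q,\infty}=\|A_j\|_{p,\infty}^{2}$, so raising to the power $q$ turns these into $\|A\|_{p,\infty}^{p}$ and $\|A_j\|_{p,\infty}^{p}$ respectively. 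Since $(1-q)^{-1}=(1-p/2)^{-1}=2/(2-p)$, we recover exactly \eqref{eq:pnorm}. The case with $A_kA_j^*=0$ is handled identically, using $AA^*$ in place of $A^*A$ and the companion identity in \eqref{eq:star}.

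The only real subtlety is justifying the identity $A^*A=\sum_j A_j^*A_j$ (or its dual) when the family is infinite: one must ensure the partial sums $S_n=\sum_{j\le n}A_j$ converge in an appropriate sense and that $S_n^*S_n=\sum_{j\le n}A_j^*A_j$ passes to the limit. Everything else is a mechanical application of \eqref{eq:ptriangle} and \eqref{eq:star}; no further operator-theoretic input is needed.
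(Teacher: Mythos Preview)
Your proof is correct and essentially identical to the paper's own argument: both reduce to \eqref{eq:ptriangle} with $q=p/2$ via the collapse $A^*A=\sum_j A_j^*A_j$ (or $AA^*=\sum_j A_jA_j^*$) afforded by \eqref{eq:orthog}, then invoke \eqref{eq:star}. The paper simply picks the first orthogonality branch and works with $AA^*$, omitting your remark about convergence of infinite sums.
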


\begin{proof} 
It suffices to conduct the proof under the first condition in 
\eqref{eq:orthog} only, so that  
\begin{align*}
A A^* = \sum_j A_j A_j^*. 
\end{align*}
Since $A_j A_j^*\in\BS_{q, \infty}, q = p/2<1$, 
and $\|A_j A_j^* \|_{q, \infty} = \|A_j\|_{p, \infty}^2$, 
the inequality \eqref{eq:ptriangle} leads to the bound 
\begin{align*}
\|A\|_{p, \infty}^p = \|A A^*\|_{q, \infty}^q \le (1-q)^{-1} 
\sum_j \|A_j A_j^*\|_{q, \infty}^q
= (1-p/2)^{-1}\sum_j \|A_j\|_{p, \infty}^p. 
\end{align*}
This inequality coincides with \eqref{eq:pnorm}.
\end{proof}

\section{Singular values of integral operators}\label{sect:int}
 
\subsection{Main estimate} 
The membership of integrals operators in various classes of compact operators, including 
$\BS_p$ and $\BS_{p, \infty}$, has been comprehensively investigated by   
 M.S. Birman and M.Z. Solomyak, in \cite{BS1977}. In this monograph 
such properties are formulated in terms of functional classes to which the integral kernels 
belong. 
Although the results that we need can be derived from \cite{BS1977}, as in 
\cite{Sobolev2022a, Sobolev2022b},
the integral kernels that we consider admit a more elementary and direct approach, 
which we describe below.  

We are interested in integral operators whose kernels imitate the structure 
of the eigenfunction $\psi$. More precisely, we consider integral operators 
$\iop(T)$ with scalar kernels $T(t, x)$, $t\in \R^l, x\in\R^d$, $l\ge1, d\ge 1$, 
satisfying the following condition. 
Let $z_k: \R^l\mapsto \R^d$,\, 
$k = 1, 2, \dots, N$, be some functions of $t\in\R^l$, 
and let $X(t) = \{z_1(t), z_2(t), \dots, z_N(t)\}\subset \R^d$.
Assume that 
for some $\a > -d$, for a.e. $t\in\R^l$ the function $T(t, x)$ is such that 
\begin{align*}
T(t, \,\cdot\,)\in\plainC\varkappa(\R^d\setminus X(t)),\quad \varkappa = \lfloor\a\rfloor + d + 1, 
\end{align*}
and that it satisfies the bounds 
   \begin{align}\label{eq:kernelexp}
|\p_x^j T(t, x)| \le A(t, x) 
\bigg(1+\sum_{k=1}^N \big(1\wedge |x-z_k(t)|\big)^{\a-|j|}\bigg),\quad  
|j| \le \varkappa,
\end{align}    
with some non-negative function $A(t, x)$. Introduce the notation 
\begin{align}\label{eq:cubes}
\CC^{(r)} = [-r/2, r/2)^d, r>0;\quad \CC^{(r)}_n = \CC^{(r)} + n,\, n\in\Z^d.
\end{align}
If $r=1$ then we omit the superscript ``$(r)$" and write $\CC = \CC^{(1)}$, 
$\CC_n = \CC+n$, $n\in\Z^d$. All constants in the subsequent inequalities may depend 
on the dimensions $l$, $d$, numbers $\a$, $r$ and $N$, but they will not depend on the functions $z_k(t)$ and $A(t, x)$. 
Our main objective in this section is the following theorem.

\begin{thm}\label{thm:kg} 
Let the kernel $T$ be as specified above, and let 
$a\in\plainL{\infty}_{\textup{\tiny loc}}(\R^d)$. Suppose  
that for each $n\in \Z^d$ the function 
$A_n(t):=\|A(t,\,\cdot\,)\|_{\plainL\infty(\CC^{(2\pi)}_n)}$ is such that 
$A_n \in \plainL2(\R^l)$.
If $\a > -d/2$, then the operator $\iop(T) a$ 
belongs to $\BS_{q, \infty}$ with $q^{-1} = 1+\a d^{-1}$, and 
\begin{align}\label{eq:pivg}
\| \iop(T) a\|_{q, \infty}\lesssim 
\bigg[\sum_{n\in\Z^d}
\| A_n\|_{\plainL2(\R^l)}^q\,\|a\|_{\plainL{\infty}(\CC_n)}^q
\bigg]^{\frac{1}{q}},
\end{align}
under the assumption that the right-hand side of the above inequality is finite. 
The implicit constant in \eqref{eq:pivg} 
does not depend on the functions $z_k$, $A$, $a$.
\end{thm}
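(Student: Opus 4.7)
The plan has three stages: localize in the $x$-variable using the disjointness of the unit cubes, reduce each local piece to an operator on the enlarged cube $\CC^{(2\pi)}_n$ whose Fourier coefficients are controlled by \eqref{eq:kernelexp}, and extract the $\BS_{q,\infty}$ bound from a rank-reduction argument based on Proposition~\ref{prop:fr}.

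For the localization step, let $B_n$ denote the integral operator with kernel $T(t,x)\,a(x)\,\1_{\CC_n}(x)$, so that $\iop(T)a=\sum_n B_n$. Since $\1_{\CC_n}\1_{\CC_m}\equiv 0$ for $n\ne m$, one has $B_n B_m^*=0$, so the orthogonality hypothesis of Proposition~\ref{prop:orthog} holds. The assumption $\alpha>-d/2$ is equivalent to $q=d/(d+\alpha)\in(0,2)$, and that proposition yields
\begin{align*}
\|\iop(T)a\|_{q,\infty}^q\lesssim\sum_n\|B_n\|_{q,\infty}^q.
\end{align*}
Fixing a cutoff $\chi_n\in\co(\CC^{(2\pi)}_n)$ with $\chi_n\equiv 1$ on $\CC_n$ and denoting by $\tilde B_n:\plainL{2}(\CC^{(2\pi)}_n)\to\plainL{2}(\R^l)$ the operator with kernel $T(t,x)\chi_n(x)$, one factors out the multiplication by $a\1_{\CC_n}$ to obtain $\|B_n\|_{q,\infty}\le\|a\|_{\plainL{\infty}(\CC_n)}\|\tilde B_n\|_{q,\infty}$. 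The theorem is thereby reduced to showing $\|\tilde B_n\|_{q,\infty}\lesssim\|A_n\|_{\plainL{2}(\R^l)}$ with a constant independent of $n$.

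Expanding in a Fourier series on $\CC^{(2\pi)}_n$ factors $\tilde B_n=M_n\CF_n$, with $\CF_n$ unitary and $M_n:\plainl{2}(\Z^d)\to\plainL{2}(\R^l)$ acting by $(M_nc)(t)=\sum_k c_k\,g_k(t)$, $g_k(t)=(2\pi)^{-d/2}\int T(t,x)\chi_n(x)e^{ik\cdot x}\,dx$. The central step is the pointwise Fourier decay
\begin{align*}
|g_k(t)|\lesssim A_n(t)(1+|k|)^{-(\alpha+d)},\quad t\in\R^l,
\end{align*}
which I would prove by splitting $\chi_n=\chi_n^{\mathrm{near}}+\chi_n^{\mathrm{far}}$ relative to the singular points $z_r(t)$, with $\chi_n^{\mathrm{near}}$ supported in $\bigcup_r\{|x-z_r(t)|<2\delta\}$ for $\delta=|k|^{-1}$. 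The near part is estimated using the $|j|=0$ case of \eqref{eq:kernelexp}: $\int|T\chi_n^{\mathrm{near}}|\,dx\lesssim A_n(t)\delta^{\alpha+d}$, using $\alpha+d>0$ for integrability of $|x-z_r|^\alpha$. Since $\chi_n^{\mathrm{far}}$ vanishes near each $z_r(t)$ and on $\partial\CC^{(2\pi)}_n$, in the far part one integrates by parts $\varkappa$ times with no boundary contributions; the Leibniz expansion together with $|\partial^{\varkappa-j}\chi_n^{\mathrm{far}}|\lesssim\delta^{j-\varkappa}$ on the transition annulus and the bound \eqref{eq:kernelexp} yields $\int|\partial_x^\varkappa(T\chi_n^{\mathrm{far}})|\,dx\lesssim A_n(t)\delta^{\alpha+d-\varkappa}$ (using $\varkappa>\alpha+d$). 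Multiplying by $|k|^{-\varkappa}=\delta^\varkappa$ reproduces $A_n(t)\delta^{\alpha+d}$, as needed.

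Given the Fourier decay, let $M_n^{(R)}$ be the truncation of $M_n$ to frequencies $|k|\le R$, which has rank $\lesssim R^d$. Then
\begin{align*}
\|M_n-M_n^{(R)}\|_2^2=\sum_{|k|>R}\|g_k\|_{\plainL{2}(\R^l)}^2\lesssim\|A_n\|_{\plainL{2}(\R^l)}^2\,R^{-(2\alpha+d)},
\end{align*}
the series converging because $2\alpha+d>0$. Proposition~\ref{prop:fr} with $p=2$ then gives $s_{2R^d}(M_n)\lesssim R^{-d/2}\|M_n-M_n^{(R)}\|_2\lesssim\|A_n\|_{\plainL{2}}R^{-(\alpha+d)}$, i.e.\ $s_k(M_n)\lesssim\|A_n\|_{\plainL{2}(\R^l)}k^{-1/q}$, so $\|\tilde B_n\|_{q,\infty}\le\|M_n\|_{q,\infty}\lesssim\|A_n\|_{\plainL{2}(\R^l)}$. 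Combining with the localization step produces \eqref{eq:pivg}. The main obstacle I anticipate is the pointwise Fourier decay: the near/far split must be tuned so the two contributions balance at the critical exponent $\alpha+d$, and the far estimate exhausts all $\varkappa=\lfloor\alpha\rfloor+d+1$ derivative bounds of \eqref{eq:kernelexp} to close the integration-by-parts argument at the endpoint.
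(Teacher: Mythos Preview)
Your overall strategy---localize to unit cubes via Proposition~\ref{prop:orthog}, insert a smooth cutoff supported in $\CC^{(2\pi)}_n$, expand in a Fourier series, and apply rank reduction through Proposition~\ref{prop:fr}---coincides with the paper's (Lemmas~\ref{lem:fourier}, \ref{lem:cubepi}, and the proof of Theorem~\ref{thm:kg}). The localization, the factoring out of $a$, and the rank-reduction step are all correct as written.

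The gap is in the Fourier-decay step, and it is specific to $\alpha>0$, which is exactly the range needed in the applications (Theorem~\ref{thm:psifull} uses $\alpha=1$ and $\alpha=2$). Your near-part bound $\int|T\chi_n^{\mathrm{near}}|\,dx\lesssim A_n(t)\,\delta^{\alpha+d}$ fails when $\alpha>0$: on the near set the case $|j|=0$ of \eqref{eq:kernelexp} gives only $|T|\lesssim A_n(t)\bigl(1+|x-z_r|^{\alpha}\bigr)\lesssim A_n(t)$, so the integral over a $\delta$-ball is $\lesssim A_n(t)\,\delta^{d}$, not $\delta^{\alpha+d}$. (Your parenthetical about integrability of $|x-z_r|^{\alpha}$ is the right concern only for $\alpha<0$; for $\alpha>0$ the constant $1$ in \eqref{eq:kernelexp} dominates.) The same issue contaminates the far part: the Leibniz term with $j=0$ on the transition annulus contributes $A_n\,\delta^{d-\varkappa}$, not $A_n\,\delta^{\alpha+d-\varkappa}$. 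No choice of $\delta$ can then balance near and far at the critical exponent; the best one obtains is $|g_k|\lesssim A_n|k|^{-d\varkappa/(\varkappa-\alpha)}$, strictly weaker than $|k|^{-(\alpha+d)}$. The paper's remedy (proof of Lemma~\ref{lem:fourier}) is to integrate by parts $m=\lceil\alpha\rceil$ times \emph{before} any near/far split---legitimate because $\partial_x^{n}T$ is bounded for $|n|<m$ and locally integrable for $|n|=m$---thereby replacing $T$ by $\partial_x^{n}T$, $|n|=m$, which satisfies \eqref{eq:kernelexp} with exponent $\beta=\alpha-m\in(-1,0]$. For $\beta\le 0$ your near/far argument is correct and closes the proof.
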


\begin{rem}\label{rem:vector}
\begin{enumerate}
\item \label{item:vector}
Although we assume that the kernel $T(t,x)$ in the above theorem is scalar, the estimate 
\eqref{eq:pivg} also holds for vector-valued kernels if the conditions of Theorem 
\ref{thm:kg} are satisfied for each component. To see this it suffices to use the bound \eqref{eq:blockvec}.
\item 
The bound \eqref{eq:pivg} is similar 
to the one obtained in \cite[Theorem 3.4]{Sobolev2022a} with the help of the general 
estimates from \cite{BS1977}. 
In contrast to 
\eqref{eq:pivg}, the bound in \cite{Sobolev2022a} contained 
$\plainL{p}$-norms of $a$ with some $p = p(\a)\ge 2$, instead of 
the $\plainL\infty$-norms of $a$.  
\end{enumerate}
\end{rem}

The proof relies on two lemmata below.

\subsection{Fourier transform} 
First we examine the Fourier transform of functions $u = u(x)$ satisfying a bound similar to \eqref{eq:kernelexp}. 

\begin{lem}\label{lem:fourier} 
Let $X = \{a_1, a_2,\dots, a_N\}$ be a finite collection of points in $\R^d$. 
Let the function $u$ be such that for some $\a > - d$ it satifies  
\begin{align*}
u\in\plainC\varkappa(\R^d\setminus X), \quad \varkappa  = \lfloor\a\rfloor + d + 1, 
\end{align*} 
and the following bounds hold:  
\begin{align}\label{eq:sing}
|\p_x^m u(x)|\lesssim \1_{\CC^{(r)}}(x) \big(1+ \sum_{k=1}^N (|x-a_k|\wedge 1)^{\a-|m|}\big),
\end{align}
with some $r>0$ and   
for all $x\notin X$ and all $|m|\le \varkappa$. 
Then 
\begin{align}\label{eq:fourier}
|\hat u(\xi)|\lesssim \lu\xi\ru^{-\a-d}, \quad \lu \xi\ru = \sqrt{1+|\xi|^2},
\end{align}
where the implicit constant does not depend on the set $X$, 
but may depend on $r$ and $N$. 
\end{lem}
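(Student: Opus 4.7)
The plan is to first reduce to the case $\a\in (-d,0]$ by an initial integration by parts, and then prove that case using a single-scale cut-off at $\delta = |\xi|^{-1}$. The range $|\xi|\le 1$ is immediate from $|\hat u(\xi)|\le \|u\|_{L^1}$: the $|m|=0$ bound in \eqref{eq:sing}, together with $\a>-d$ and the compact support of $u$, gives $\|u\|_{L^1}\lesssim 1$ with an implied constant depending only on $r$ and $N$.

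For $\a\in (-d,0]$ and $|\xi|\ge 1$, I set $\delta=|\xi|^{-1}$ and define a smooth cut-off $\chi_\delta(x) = \prod_{k=1}^N \psi(|x-a_k|/\delta)$, where $\psi\in C^\infty(\R)$ vanishes on $(-\infty,1]$ and equals $1$ on $[2,\infty)$; then $\chi_\delta\equiv 1$ on $\{d(x)\ge 2\delta\}$ and $\chi_\delta\equiv 0$ on $\{d(x)\le \delta\}$, where $d(x):=\min_k|x-a_k|$, the derivatives satisfy $|\p^m\chi_\delta|\lesssim \delta^{-|m|}$, and $\nabla\chi_\delta$ is supported in the union of shells $\{|x-a_k|\in [\delta,2\delta]\}$. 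I split $u = u\chi_\delta + u(1-\chi_\delta)$. For the near part, $|\widehat{u(1-\chi_\delta)}(\xi)|\le \|u(1-\chi_\delta)\|_{L^1}\lesssim \sum_k\int_{B(a_k,2\delta)}(1+|x-a_k|^\a)\,dx\lesssim N\delta^{\a+d}$, where in the last step I use $\a\le 0$ so that the singular term dominates. For the far part I integrate by parts $\varkappa$ times in the coordinate $x_j$ with $|\xi_j|$ maximal (boundary terms on $\p\CC^{(r)}$ vanish because \eqref{eq:sing} forces all derivatives of $u$ of order $\le\varkappa$ to vanish there). Expanding $\p_{x_j}^\varkappa(u\chi_\delta)$ by Leibniz and using $|\p^k u|\lesssim |x-a_k|^{\a-k}$ (the singular term dominates $1$ since $\a\le 0$) together with $|\p^{\varkappa-k}\chi_\delta|\lesssim \delta^{-(\varkappa-k)}$, each product integrates either over a shell (of volume $\sim\delta^d$, if $k<\varkappa$) or over $\{d(x)\ge\delta\}$ (if $k=\varkappa$) to at most $\delta^{\a-\varkappa+d}$. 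Hence $|\widehat{u\chi_\delta}(\xi)|\lesssim |\xi|^{-\varkappa}\delta^{\a-\varkappa+d} = |\xi|^{-\a-d}$.

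For $\a>0$ the preliminary reduction to the previous case is achieved by $\lceil\a\rceil$ integrations by parts in the $x_j$-direction. This is justified because $\p^k u$ is in $L^1(\R^d)$ for $k\le \lceil\a\rceil$ (by \eqref{eq:sing}, using $\a-\lceil\a\rceil >-1\ge -d$) and boundary terms vanish as before. The outcome is $\hat u(\xi) = (i\xi_j)^{-\lceil\a\rceil}\hat v(\xi)$ with $v=\p_{x_j}^{\lceil\a\rceil}u$, and $v$ satisfies the hypotheses of the lemma with $\a$ replaced by $\a'=\a-\lceil\a\rceil\in (-1,0]$ and $\varkappa'=\lfloor\a'\rfloor+d+1\le\varkappa-\lceil\a\rceil$. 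Applying the case already treated to $v$ yields $|\hat v(\xi)|\lesssim |\xi|^{-(\a'+d)}$, and combined with $|\xi_j|\asymp |\xi|$, this gives $|\hat u(\xi)|\lesssim |\xi|^{-\a-d}$.

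The main technical obstacle is the Leibniz bookkeeping for the far-part estimate: each of the $\varkappa+1$ terms $\p^k u\cdot \p^{\varkappa-k}\chi_\delta$ must be verified to contribute at most $\delta^{\a-\varkappa+d}$ to the $L^1$ norm. The reduction to $\a\le 0$ is essential: when $\a>0$, the constant ``$1$'' in the pointwise bound $|\p^k u|\lesssim 1+|x-a_k|^{\a-k}$ would produce terms of size $\delta^{d-\varkappa}$, larger than the desired $\delta^{\a+d-\varkappa}$; the preliminary integrations by parts are precisely what eliminates this obstruction.
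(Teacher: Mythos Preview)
Your proof is correct and follows essentially the same strategy as the paper's: reduce to $\a\le 0$ by a preliminary integration by parts, then use a single-scale cut-off at $\varepsilon=|\xi|^{-1}$, estimate the near part in $L^1$, and handle the far part by $\varkappa$ integrations by parts together with the Leibniz rule. The only cosmetic differences are that the paper integrates by parts via the operator $|\xi|^{-2}i\xi\cdot\nabla_x$ rather than along the coordinate direction of maximal $|\xi_j|$, and posits an abstract partition of unity $\eta+\zeta=1$ rather than your explicit product cut-off; neither change affects the substance of the argument.
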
 

\begin{proof} 
Clearly, it suffices to prove \eqref{eq:fourier} for $|\xi|\ge 1$.

First we prove the required estimate for $\a\le 0$, so that 
\eqref{eq:sing} amounts to 
\begin{align}\label{eq:sing1}
|\p_x^m u(x)|\lesssim \1_{\CC^{(r)}}(x) 
\big(1+\sum_{k=1}^N |x-a_k|^{\a-|m|}\big),\quad |m|\le \varkappa.
\end{align}
For each $\varepsilon\in (0, 1]$ 
introduce a smooth partition of unity $\eta(x) = \eta(x, \varepsilon)$,\, 
$\z(x) = \z(x, \varepsilon)$, such that $\eta+\z = 1$, $0\le \eta\le 1$, and 
\begin{align*}
\eta(x; \varepsilon) = 0,\quad & \textup{if}\quad \dist(x; X)\ge \varepsilon;\\
\z(x; \varepsilon) = 0,\quad & \textup{if}\quad \dist(x; X) <  \frac{\varepsilon}{2},
\end{align*} 
and 
\begin{align}\label{eq:tz}
|\p_x^m \eta(x)|\lesssim \varepsilon^{-|m|},\quad 
|\p_x^m \z(x)|\lesssim \varepsilon^{-|m|},\quad m\in \mathbb N_0^d,
\end{align}
where the implicit constants do not depend on the set $X$. 
Represent $\hat u(\xi) = I_0(\xi, \varepsilon) + I_1(\xi, \varepsilon)$ with 
\begin{align*}
I_0(\xi; \varepsilon) = \frac{1}{(2\pi)^{\frac{d}{2}}}\int e^{-ix\cdot\xi}\, u(x)\, 
\eta(x; \varepsilon)\, dx\quad \textup{and}\ 
I_1(\xi; \varepsilon) = \frac{1}{(2\pi)^{\frac{d}{2}}} 
\int e^{-ix\cdot\xi}\, u(x)\, \z(x; \varepsilon)\, dx,
\end{align*}
and estimate $I_0$ and $I_1$ separately. 
Let 
\begin{align*}
\Om_k = \{x\in\CC^{(r)}: \dist(x; X) = |x-a_k|\},\quad k = 1, 2, \dots, N,
\end{align*}
so $\CC^{(r)} = \cup_k\Om_k$. Then, by \eqref{eq:sing1},
\begin{align}\label{eq:i0}
|I_0(\xi, \varepsilon)|\lesssim &\ 
\sum_{k=1}^N\, \int\limits_{\Om_k\cap B(a_k, \varepsilon)} |x-a_k|^\a\, dx\notag\\
\lesssim &\ \sum_{k=1}^N\, \int\limits_{|x-a_k| \le \varepsilon} |x-a_k|^\a\, dx
 \lesssim \varepsilon^{\a+d}.
\end{align}
To estimate $I_1$ 
%
%
integrate by parts $\varkappa$ times 
using the relation $|\xi|^{-2}\, i\xi\cdot\nabla_x\, e^{-ix\cdot\xi} = e^{-ix\cdot\xi}$:
\begin{align*}
I_1 =  \frac{1}{(2\pi)^{\frac{d}{2}}}
\int e^{-ix\cdot\xi} (-|\xi|^{-2}\, i\xi\cdot\nabla_x)^\varkappa\, 
\big(u(x)\, \z(x; \varepsilon)\big)\, dx.  
\end{align*}
It is clear that 
\begin{align*}
|I_1 |\lesssim 
|\xi|^{-\varkappa}\,\sum_{p, s: |p|+|s| = \varkappa}\, 
\int\, |\p_x^s u(x)|\, |\p_x^p \z(x; \varepsilon)|\, dx.
\end{align*}
First estimate the integral with $p = 0$:
\begin{align*}
\int\, |\p_x^s u(x)|\, dx
\lesssim &\  \sum_{k=1}^N\, 
\int_{\Om_k\setminus B(a_k, \varepsilon/2)} (1+|x-a_k|^{\a-\varkappa})\, dx \\
\lesssim &\  \sum_{k=1}^N\, 
\int\limits_{x\in \CC^{(r)}, |x-a_k|>\varepsilon/2} (1+|x-a_k|^{\a-\varkappa})\, dx
\lesssim \varepsilon^{\a-\varkappa+d},
\end{align*}
where we have used the fact that $\a-\varkappa<-d$. Suppose that $|p| \ge 1$. By the definition 
of $\z$, the derivative $\p_x^p\z$ is supported on the set where 
$\varepsilon/2\le  \dist(x, X)\le \varepsilon$. 
Therefore, by \eqref{eq:tz}, 
\begin{align*}
\int\, |\p_x^s u(x)|\, |\p_x^p \z(x; r)|\, dx
\lesssim &\ \sum_{k=1}^N\, \int_{(\Om_k\cap B(a_k, \varepsilon))\setminus B(a_k, \varepsilon/2)}
(1+|x-a_k|^{\a-|s|})\, \varepsilon^{-|p|} dx \\
\lesssim &\ (1+\varepsilon^{\a-|s|}) \varepsilon^{-|p|}  \varepsilon^d
\lesssim  \varepsilon^{\a-\varkappa +d}.
\end{align*}
Consequently, 
\begin{align*}
|I_1(\xi, \varepsilon) |\lesssim |\xi|^{-\varkappa} \varepsilon^{\a-\varkappa+d}.
\end{align*}
Combining this bound with \eqref{eq:i0} we obtain:
\begin{align*}
|\hat u(\xi)|\lesssim \big( \varepsilon^{\a+d} + |\xi|^{-\varkappa} 
\varepsilon^{\a-\varkappa+d}\big). 
\end{align*}
Taking $\varepsilon = |\xi|^{-1}$ we conclude the proof of the claimed bound 
\eqref{eq:fourier} for $\a\le 0$. 

For arbitrary $\a >0$, let 
$m = \lceil \a\rceil < \varkappa$, so that $-1<\a-m\le 0$. 
Rewrite the Fourier transform of $u$ integrating by parts $m$ times:
\begin{align*}
\hat u(\xi) = \frac{1}{(2\pi)^{\frac{d}{2}}}
\int e^{-ix\cdot\xi} (-|\xi|^{-2}\, i\xi\cdot\nabla_x)^m\,  u(x) \, dx,
\end{align*}
so that
\begin{align*}
|\hat u(\xi)|\lesssim |\xi|^{-m} \sum_{|n|=m}
%
%
|\widehat{(\p^n u)}(\xi)|.
\end{align*}
Since $\b:=\a-m > -1$, the right-hand side is finite for all $\xi\in\R^d$.  
The function 
%
%
$v = \p^n_x u, |n|=m$, satisfies $v\in\plainC{\om}(\R^d\setminus X)$ 
with $\om = \lfloor\b\rfloor + d+1$, and the bounds hold:
\begin{align*}
|\p_x^s v(x)|\lesssim  (1+ \sum_k |x-a_k|^{\b - s}),\quad |s|\le \om.
\end{align*}
Since $\b\le 0$, it follows from the first part of the proof, 
that 
\begin{align*}
|\hat u(\xi)|\lesssim  |\xi|^{-m} \, |\xi|^{-\b -d} =  |\xi|^{-\a-d},
\end{align*}
which gives \eqref{eq:fourier} and completes the proof.
\end{proof}

\subsection{Operators on the cube} 
The next step towards the proof of Theorem \ref{thm:kg} is the $s$-value estimate 
for integral operators on the cube. 

\begin{lem}\label{lem:cubepi}
Let the kernel $T$ be as in Theorem 
\ref{thm:kg}, and assume that $A(t, x) = A(t, x)\1_{\CC^{(2\pi)}}(x)$. 
Suppose that  
$\tilde A(t):=\|A(t,\,\cdot\,)\|_{\plainL\infty(\CC^{(2\pi)})}$ is such that 
$\tilde A\in\plainL2(\R^l)$.
If $\a > -d/2$, then 
the operator $ \iop(T)$ 
belongs to $\BS_{q, \infty}$, with $q^{-1} = 1+\a d^{-1}$, and 
\begin{align}\label{eq:piv}
\|\iop(T)\|_{q, \infty}\lesssim 
\|\tilde A \|_{\plainL2(\R^l)}.
\end{align} 
The implicit constant in \eqref{eq:piv} does not depend on the functions $z_k$.
\end{lem}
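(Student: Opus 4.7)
The plan is to combine the Fourier-series expansion of $T(t,\cdot)$ on the cube $\CC^{(2\pi)}$ with the Hilbert-Schmidt approximation estimate of Proposition~\ref{prop:fr}. For each fixed $t\in\R^l$, the function $x\mapsto T(t,x)$ is supported in $\CC^{(2\pi)}$ (since $A$ is) and satisfies the hypotheses of Lemma~\ref{lem:fourier} with $r=2\pi$ and singular set $X(t)=\{z_1(t),\dots,z_N(t)\}$, because $A(t,x)\le \tilde A(t)\,\1_{\CC^{(2\pi)}}(x)$ in \eqref{eq:kernelexp}. Consequently, the Fourier coefficients
\begin{align*}
\hat T_\xi(t)=(2\pi)^{-d/2}\int_{\CC^{(2\pi)}}T(t,x)\,e^{-i\xi\cdot x}\,dx,\quad \xi\in\Z^d,
\end{align*}
which coincide with the continuous Fourier transform of $T(t,\cdot)\,\1_{\CC^{(2\pi)}}$ evaluated at integer frequencies, obey the pointwise bound $|\hat T_\xi(t)|\lesssim \tilde A(t)\,\lu\xi\ru^{-\a-d}$, with a constant uniform in both $t$ and the positions $z_k(t)$.

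Next I will approximate $\iop(T)$ by the finite-rank operator $\iop(T_N)$, where $T_N$ is the partial Fourier sum retaining frequencies $|\xi|\le N$; its rank is bounded by $\#\{\xi\in\Z^d:|\xi|\le N\}\lesssim N^d$. By Parseval in $x$ together with Fubini,
\begin{align*}
\|T-T_N\|_{\plainL2(\R^l\times\R^d)}^{2}=\int_{\R^l}\sum_{|\xi|>N}|\hat T_\xi(t)|^{2}\,dt\lesssim \|\tilde A\|_{\plainL2}^{2}\sum_{|\xi|>N}\lu\xi\ru^{-2(\a+d)}\lesssim \|\tilde A\|_{\plainL2}^{2}\,N^{-2\a-d},
\end{align*}
where the tail sum converges, and equals $O(N^{-2\a-d})$, precisely because $\a>-d/2$. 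Since the $\BS_2$-norm of an integral operator is bounded by the $\plainL2$-norm of its kernel, this gives $\|\iop(T)-\iop(T_N)\|_{2}\lesssim \|\tilde A\|_{\plainL2}\,N^{-\a-d/2}$. Applying Proposition~\ref{prop:fr} with $p=2$ and $K=\iop(T_N)$ (of rank $\le c_d N^d$) yields
\begin{align*}
s_{2c_d N^{d}}(\iop(T))\le (c_d N^{d})^{-1/2}\,\|\iop(T)-\iop(T_N)\|_{2}\lesssim N^{-(\a+d)}\,\|\tilde A\|_{\plainL2}.
\end{align*}
Choosing $N\asymp k^{1/d}$ and invoking monotonicity of singular values then produces $s_k(\iop(T))\lesssim k^{-(\a+d)/d}\|\tilde A\|_{\plainL2}=k^{-1/q}\|\tilde A\|_{\plainL2}$, which is \eqref{eq:piv}.

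The main delicate point is the uniformity in $t$ of the constant in Lemma~\ref{lem:fourier}: it must depend only on $r=2\pi$ and on $N$, not on the (arbitrary) positions $z_k(t)$ of the singular set, so that squaring the pointwise Fourier bound and integrating in $t$ cleanly extracts the factor $\|\tilde A\|_{\plainL2}^{2}$. Granted this uniformity, everything else is bookkeeping, and the balance between the rank $N^d$ of the approximation and the $\BS_2$-tail of size $N^{-\a-d/2}$ reproduces the sharp exponent $1/q=(\a+d)/d$. The borderline character of the hypothesis $\a>-d/2$ is transparent from this calculation, since it is exactly the condition under which the tail $\sum_{|\xi|>N}\lu\xi\ru^{-2(\a+d)}$ is finite, i.e.\ under which $\iop(T)$ is even a Hilbert--Schmidt operator.
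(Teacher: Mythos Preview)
Your proposal is correct and follows essentially the same route as the paper: expand $T(t,\cdot)$ in a Fourier series on $\CC^{(2\pi)}$, use Lemma~\ref{lem:fourier} to bound the coefficients pointwise by $\tilde A(t)\lu\xi\ru^{-\a-d}$, truncate at frequency $M$ to obtain a rank-$\asymp M^d$ approximation, estimate the Hilbert--Schmidt tail, and feed this into Proposition~\ref{prop:fr}. The only cosmetic differences are that the paper uses $M$ rather than your $N$ for the cutoff (avoiding a clash with the number $N$ of singular points $z_k$), and it spells out explicitly the interpolation between consecutive values of $m(M)$ via $m(M+1)\lesssim m(M)$ to cover all $k$, together with the trivial bound $s_k\le\|\iop(T)\|_2$ for small $k$.
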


\begin{proof} 
Since $T(t, x) = T(t, x) \1_{\CC^{(2\pi)}}(x)$, we consider 
$\iop(T)$ as an operator acting on $\plainL2(\CC^{(2\pi)})$ . 
Denote
\begin{align*}
\hat T_\nu(t) = \frac{1}{(2\pi)^{\frac{d}{2}}} \int_{\CC^{(2\pi)}} \, 
e^{-i\nu\cdot x}\, T(t, x) \, dx,\quad \nu\in\Z^d.  
\end{align*}
By Lemma \ref{lem:fourier}, $|\hat T_\nu(t)|\lesssim \tilde A(t) \lu \nu\ru^{-\a-d}$, 
so that $\hat T_\nu\in \plainL2(\R^l)$ and  for a.e. $t\in \R^l$ the series 
\begin{align*}
T(t, x) = \frac{1}{(2\pi)^{\frac{d}{2}}} \sum_{\nu\in\mathbb Z^d} \, 
e^{i\nu\cdot x}\, \hat T_\nu(t),
\end{align*}
converges in $\plainL2(\CC^{(2\pi)})$. 
For a fixed $M\ge 1$ the rank of the operator 
$\iop(T_M):\plainL2(\CC^{(2\pi)})\mapsto \plainL2(\R^l)$   
with kernel 
\begin{align*}
T_M(t, x) = \frac{1}{(2\pi)^{\frac{d}{2}}} \sum_{|\nu|\le M} \, 
e^{i\nu\cdot x}\, \hat T_\nu(t),
\end{align*}
does not exceed  
\begin{align*}
m = m(M):= \#\{\nu\in \Z^d: |\nu| \le M\}
\asymp M^d.
\end{align*}
Therefore, by Proposition \ref{prop:fr},
\begin{align}\label{eq:finiterank}
s_{2m}(\iop(T))\le m^{-\frac{1}{2}}\, \| \iop(T) - \iop(T_M)\|_2. 
\end{align}
Estimate the Hilbert-Schmidt norm on the right-hand side:
\begin{align*}
\int_{\R^l}\, \int_{\CC^{(2\pi)}}|T(t, x) - & T_M(t, x)|^2 \, dx dt
=  \sum_{|\nu|>M} \int_{\R^l}\, |\hat T_\nu(t)|^2 \, dt\\
\lesssim  &\ \| \tilde A\|_{\plainL2}^2\, \sum_{|\nu|>M} \, \lu \nu\ru^{-2(\a+d)}
\lesssim  M^{-2\a - d}\| \tilde A\|_{\plainL2}^2\lesssim m^{-\frac{2}{q}+ 1}
\| \tilde A\|_{\plainL2}^2.
\end{align*}
By \eqref{eq:finiterank}, 
\begin{align*}
s_{2m}(\iop(T))\lesssim 
m^{-\frac{1}{2}} \bigg(
m^{-\frac{1}{q} + \frac{1}{2}} \ \|\tilde A\|_{\plainL2}\bigg) = 
m^{-\frac{1}{q}}\, \|\tilde A\|_{\plainL2}.
\end{align*}
Thus for any $k: 2m(M)\le k < 2m(M+1)$, we have 
\begin{align*}
s_k(\iop(T))\le s_{2m(M)}(\iop(T))\lesssim \bigg(\frac{k}{m(M)}\bigg)^{\frac{1}{q}}
k^{-\frac{1}{q}} \|\tilde A\|_{\plainL2}\le \bigg(\frac{2 m(M+1)}{m(M)}\bigg)^{\frac{1}{q}}
k^{-\frac{1}{q}} \|\tilde A\|_{\plainL2}.
\end{align*}
Since $m(M+1)\lesssim m(M)$ with a constant independent of $M$, the singular values satisfy the 
bound $s_k(\iop(T))\lesssim k^{-\frac{1}{q}} \|\tilde A\|_{\plainL2}$ 
 for all $k\ge 2m(1)$. The same bound also holds for all $k: 1\le k\le 2 m(1)$ due to the straighforward estimate $s_k(\iop(T))\le \|\iop(T)\|_2 \lesssim \|\tilde A\|_{\plainL2}$. 
\end{proof}

Now we are ready to prove Theorem \ref{thm:kg}.
 
\subsection{Proof of Theorem \ref{thm:kg}}
Without loss of generality assume that $a$ is constant on each cube $\CC_n=\CC^{(1)}_n$, and write 
$a_n = a(x)\1_n(x)$ where $\1_n$ is the indicator of $\CC_n$.
Consider the 
kernel $W_n(t, x) = T(t, x) a_n \1_n(x)$, so that
\begin{align*}
\iop(T)\,a = \sum_{n\in\Z^d} \, \iop( W_n).
\end{align*}
Let $\eta\in \plainC\infty_0(\CC^{(2\pi)})$  
be a function such that $\eta(x) = 1$ if $x\in \CC$, and denote 
\[
\widetilde W_n(t, x) = T(t, x)\eta_n(x) a_n,\quad \eta_n(x) = \eta(x-n).
\]
 Clearly,  
$s_k\big(\iop(W_n)\big)\le s_k\big(\iop(\widetilde W_n)\big),\  k = 1, 2, \dots$. 
The kernel $T(t, x)\eta_n(x)$ satisfies \eqref{eq:kernelexp} with the function 
$A(t, x)\1_{\CC^{(2\pi)}_n}(x)$. Thus, 
by Lemma \ref{lem:cubepi}, the operator $\widetilde W_n$ belongs to 
$\BS_{q, \infty}$ with $1/q = 1+ \a/d$, and 
\begin{align*}
\| \iop(W_n)\|_{q, \infty} 
\le &\  \| \iop(\widetilde W_n)\|_{q, \infty}\notag\\[0.2cm]
\lesssim &\ \|A_n\|_{\plainL2(\R^l)}
 \, |a_n|,
 \quad A_n(t) = \|A(t, \,\cdot\,)\|_{\plainL\infty(\CC^{(2\pi)}_n)}, 
\end{align*} 
with a constant independent of $n$. 
Now observe that $\iop(W_k)\, (\iop W_j)^* = 0$ for $k\not = j$. Furthermore, 
since $\a > -d/2$ we have $q < 2$, and hence by Proposition \ref{prop:orthog},
\begin{align*}
\| \iop(T) a\|_{q, \infty}^q\le &\ 2(2-q)^{-1} \sum_{n\in\mathbb Z^d} 
\|\iop(W_n)\|_{q, \infty}^q\\
\lesssim &\  \sum_{n\in\Z^d}\|A_n\|_{\plainL2(\R^l)}^q
 \, \|a\|_{\plainL{\infty}(\CC_n)}^q.
\end{align*}
This completes the proof of \eqref{eq:pivg}.
\qed

\section{Proof of Theorems \ref{thm:main} and \ref{thm:main0}}\label{sect:proof}

We rely on the factorization \eqref{eq:factor} with operators $\Psi$ 
and $\SV$ that are defined in \eqref{eq:psiv}. First we study $\Psi$ and $\SV$.

\subsection{The weighted operators $\Psi$ and $\SV$}  
For methodological purposes (e.g. for the study of the eigenvalue asymptotics, 
see \cite{Sobolev2022, Sobolev2022a} )
we obtain estimates for the operators $b \Psi a$ and $b \SV a$ 
with weights $a$ and $b$. 
In what follows we use the notation $ \CC^{(r)}_n, n\in\Z^d, r>0,$ 
that was introduced in \eqref{eq:cubes}. For $f\in\plainL2_{\rm loc}(\R^{3N-3})$ define 
the ``weighted" one-particle density (cf. \eqref{eq:dens}) 
\begin{align*}
\rho[f](x) =  
\int_{\R^{3N-3}} \, |f(\hat\bx)|^2 \, |\psi(\hat\bx, x)|^2   \, d\hat\bx,
\end{align*}
and introduce 
the mean value 
\begin{align*}
\tilde f_R(\hat\bx) = \bigg[ \frac{1}{|B(\hat{\mathbf 0}, R)|} 
\int_{B(\hat\bx, R)} |f(\hat\by)|^2\, d\hat\by\bigg]^{\frac{1}{2}},
\end{align*}
where $B(\hat\bx, R)\subset\R^{3N-3}$ is the ball of radius $R>0$ 
centered at $\hat{\bx}$.
Finally, for functions $f\in\plainL2_{\rm loc}(\R^{3N-3})$ and 
$a\in \plainL\infty_{\rm loc}(\R^3)$ define 
\begin{align*}
M_q(a,f) = \bigg[\sum_{n\in\Z^d}
\|\,\rho[f]\,\|_{\plainL1(\CC^{(4\pi)}_n)}^{\frac{q}{2}}\,\|a\|_{\plainL{\infty}(\CC_n)}^{q}
\bigg]^{\frac{1}{q}},\quad q >0.
\end{align*}
Now we are in a position to state $s$-value bounds for the weighted operators 
$b\, \Psi\, a$ and $b\, \SV\, a$. 
We assume that the coefficients $M_q$ on the right-hand sides of the bounds below, are finite. 

\begin{thm}\label{thm:psifull} 
Assume that $b\in\plainL2_{\rm loc}(\R^{3N-3})$ and 
$a\in \plainL\infty_{\rm loc}(\R^3)$. Let $R>0$ be arbitrary. 
Then  $ b \Psi a\in \BS_{3/4, \infty}$, $ b {\sf V} a\in \BS_{1, \infty}$, and 
\begin{align}\label{eq:psi}
\|b\,\Psi\, a\|_{3/4, \infty}\lesssim 
&\ M_{3/4}(a, \tilde b_R),\\ 
\|b\, {\sf V}\, a\|_{1, \infty}\lesssim 
&\ M_{1}(a, \tilde b_R).\label{eq:v}
\end{align}
If \eqref{eq:van} is satisfied, then $ b \Psi a\in \BS_{3/5, \infty}$, 
$b {\sf V} a\in \BS_{3/4, \infty}$, and 
\begin{align}\label{eq:psi0}
\|b\,\Psi\, a\|_{3/5, \infty}\lesssim 
&\ M_{3/5}(a, \tilde b_R),\\
\|b\, {\sf V}\, a\|_{3/4, \infty}\lesssim 
&\ M_{3/4}(a, \tilde b_R).\label{eq:v0}
\end{align}
The constants in the above bounds do not depend on $\psi, b, a$ but may depend on $R$ and $N$.
\end{thm}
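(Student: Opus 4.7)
The plan is to reduce all four inequalities to a single application of Theorem \ref{thm:kg} to the integral kernel of the operator in question, with $l=3N-3$ and $d=3$. The operator $b\,\Psi\,a$ has scalar kernel $b(\hat\bx)\psi(\hat\bx,x)a(x)$, whereas $b\,\SV\,a$ has the $\mathbb C^3$-valued kernel $b(\hat\bx)\nabla_x\psi(\hat\bx,x)a(x)$, handled componentwise via Remark \ref{rem:vector}. In every case the singularity points in the $x$-variable are $z_0(\hat\bx)=0$ and $z_k(\hat\bx)=x_k$, $k=1,\ldots,N-1$, the amplitude function in \eqref{eq:kernelexp} is $A(\hat\bx,x)=|b(\hat\bx)|\,\|\psi\|_{\plainL2(B(\bx,R))}$, and the only thing that differs across the four bounds is the exponent $\alpha$ supplied by the appropriate regularity result of Section \ref{sect:reg}.

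For \eqref{eq:psi}, Proposition \ref{prop:FS} together with the elementary inequality
\[
\l(\bx)^{1-|m|}\le(1\wedge|x|)^{1-|m|}+\sum_{k=1}^{N-1}(1\wedge|x-x_k|)^{1-|m|}
\]
(valid for $|m|\ge 1$, absorbing the factor $1/\sqrt 2$) yields \eqref{eq:kernelexp} with $\alpha=1$, so Theorem \ref{thm:kg} applies with $1/q=1+1/3$, giving $q=3/4$. For \eqref{eq:v} the same argument applied to $\p_x^{m+e_j}\psi$ ($j=1,2,3$) produces $\alpha=0$ and $q=1$. To convert the right-hand side of \eqref{eq:pivg} into an $M_q$-norm I use the Fubini step: for $x\in\CC^{(2\pi)}_n$ and $R\le\pi$ one has $B(\bx,R)\subset B(\hat\bx,R)\times\CC^{(4\pi)}_n$, so
\[
A_n(\hat\bx)^2\le|b(\hat\bx)|^2\int_{B(\hat\bx,R)}\int_{\CC^{(4\pi)}_n}|\psi(\hat\by,y)|^2\,dy\,d\hat\by.
\]
Integrating in $\hat\bx$ and exchanging the order of integration extracts $|B(\hat 0,R)|\,\tilde b_R(\hat\by)^2$ from the $b$-integral, while the remaining $\hat\by$-integral reassembles into $\rho[\tilde b_R](y)$, giving
\[
\|A_n\|_{\plainL2(\R^{3N-3})}^2\lesssim\|\rho[\tilde b_R]\|_{\plainL1(\CC^{(4\pi)}_n)}.
\]
Inserted into \eqref{eq:pivg} this yields \eqref{eq:psi} and \eqref{eq:v}.

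For the improved bounds \eqref{eq:psi0} and \eqref{eq:v0} I would exploit the factorisation $\psi=e^{F_0}(e^{-F_0}\psi)$ with $e^{-F_0}\psi$ enjoying the sharper estimate \eqref{eq:varphider}. The kernel of $b\,\Psi\,a$ rewrites as $b(\hat\bx)(e^{-F_0}\psi)(\hat\bx,x)\cdot\bigl(e^{F_0(x)}a(x)\bigr)$; since $e^{F_0}\in\plainL\infty$ one has $\|e^{F_0}a\|_{\plainL\infty(\CC_n)}\lesssim\|a\|_{\plainL\infty(\CC_n)}$, and Theorem \ref{thm:varphider} supplies \eqref{eq:kernelexp} with $\alpha=2$, producing $q=3/5$ and \eqref{eq:psi0}. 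For $b\,\SV\,a$ I would use the Leibniz splitting
\[
\nabla_x\psi(\hat\bx,x)=\nabla F_0(x)\,\psi(\hat\bx,x)+e^{F_0(x)}\nabla_x(e^{-F_0}\psi)(\hat\bx,x).
\]
The first piece is the operator $b\,\Psi(\nabla F_0\cdot a)$, which by the already-proven \eqref{eq:psi} lies in $\BS_{3/4,\infty}$ with norm $\lesssim M_{3/4}(a,\tilde b_R)$, since $\nabla F_0\in\plainL\infty(\R^3)$. The second piece falls under Theorem \ref{thm:kg} with $\alpha=1$ obtained by differentiating \eqref{eq:varphider} once, and weight $e^{F_0}a$; this again gives $q=3/4$. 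The two are combined via the quasi-triangle inequality \eqref{eq:ptriangle} (valid since $3/4<1$) to produce \eqref{eq:v0}.

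The main technical point is that in the $\SV$ case under \eqref{eq:van} one cannot simply differentiate \eqref{eq:varphider} to obtain an $\alpha=1$ bound on $\nabla_x\psi$ itself: the Leibniz term $\nabla F_0\cdot\psi$ has higher $x$-derivatives that blow up at the origin and hence fails \eqref{eq:kernelexp} with $\alpha=1$. The trick is that this rough term is nevertheless controlled by the unrefined bound \eqref{eq:psi}, while the remaining smooth term $e^{F_0}\nabla_x(e^{-F_0}\psi)$ inherits the $\alpha=1$ regularity from \eqref{eq:varphider}, and both happen to land in $\BS_{3/4,\infty}$.
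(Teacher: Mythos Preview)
Your proposal is correct and follows essentially the same route as the paper's proof: both apply Theorem \ref{thm:kg} with $d=3$, $l=3N-3$, amplitude $A(\hat\bx,x)=|b(\hat\bx)|\,\|\psi\|_{\plainL2(B(\bx,R))}$, perform the identical Fubini computation to convert $\|A_n\|_{\plainL2}^2$ into $\|\rho[\tilde b_R]\|_{\plainL1(\CC^{(4\pi)}_n)}$, and under \eqref{eq:van} use the factorisation through $e^{F_0}$ together with the same Leibniz splitting of $\nabla_x\psi$ for \eqref{eq:v0}. Your closing remark on why the splitting is necessary (the term $\psi\,\nabla F_0$ fails \eqref{eq:kernelexp} with $\alpha=1$ but is still caught by the unrefined bound \eqref{eq:psi}) is exactly the point, though the paper does not spell it out.
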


\begin{proof}
We use Theorem \ref{thm:kg} with $d=3$ and $l = 3N-3$. 
By \eqref{eq:FS}, the kernel $T(\hat\bx, x) = b(\hat\bx)\psi(\hat\bx, x)$ 
satisfies the bound \eqref{eq:kernelexp} 
with $\a = 1$, $z_j(x) = x_j, j = 1, 2, \dots, N-1$, 
and 
\begin{align}\label{eq:newa}
A(\hat\bx, x) = |b(\hat\bx)|\|\psi\|_{\plainL2(B(\bx, R))},
\end{align}
$\bx = (\hat\bx, x)$. Without loss of generality assume that $R\le \pi$. 
Therefore 
\begin{align*}
A_n(\hat\bx) = \|A(\hat\bx,\,\cdot\,)\|_{\plainL\infty(\CC^{(2\pi)}_n)}\le 
|b(\hat\bx)| 
\|\psi\|_{\plainL2(B_n(\hat\bx))},\ \quad B_n(\hat\bx) = B(\hat\bx, R)\times \CC^{(4\pi)}_{n}.
\end{align*}
As a consequence, 
\begin{align*}
\|A_n\|_{\plainL2(\R^{3N-3})}^2
\le &\ \int_{\R^{3N-3}} \,  |b(\hat\bx)|^2   
\int_{|\hat\by-\hat\bx|<R}   \int_{\CC^{(4\pi)}_n} |\psi(\hat\by, x)|^2\,   
dx\, d\hat\by  d\hat\bx\\
= &\ \int_{\CC^{(4\pi)}_n}\int_{\R^{3N-3}} \,     
\bigg(\int_{|\hat\by-\hat\bx|<R} |b(\hat\bx)|^2 d\hat\bx\bigg) \, |\psi(\hat\by, x)|^2\,
d\hat\by\, dx\\
\lesssim  &\  \int_{\CC_n^{(4\pi)}}  \, \rho[\tilde b_R](x)  \, dx.
\end{align*}
Now the bound \eqref{eq:psi} follows from \eqref{eq:pivg} where $1/q = 1+ \a/3 = 4/3$. 

The kernel $T(\hat\bx, x) = b(\hat\bx)\nabla\psi(\hat\bx, x)$ 
satisfies \eqref{eq:kernelexp} with the function \eqref{eq:newa} and with $\a = 0$. 
A reference to \eqref{eq:pivg} and 
Remark \ref{rem:vector}\eqref{item:vector} leads to \eqref{eq:v}.

Assume the condition \eqref{eq:van}. Rewrite 
$b\Psi a$ using the function $F_0$ defined in \eqref{eq:F0}:
\begin{align*}
b\,\Psi\, a = \iop(b e^{-F_0} \psi)\, e^{F_0} a.
\end{align*}
By Theorem \ref{thm:varphider}, 
the kernel $be^{-F_0} \psi$ satisfies \eqref{eq:kernelexp} with 
$\a = 2$ and the same function \eqref{eq:newa}. 
Apply \eqref{eq:pivg} with $e^{F_0}a$ instead of $a$. Since  $e^{F_0}\lesssim 1$, 
\eqref{eq:pivg} implies \eqref{eq:psi0}. 
For the proof of \eqref{eq:v0} represent $\nabla_x\psi$ as follows:
\begin{align*}
\nabla_x\psi = e^{F_0} \nabla_x \big(e^{-F_0}\psi\big) + \psi\nabla_x F_0,  
\end{align*}
so that 
\begin{align}\label{eq:np}
b\, \SV\, a = \iop(b\, \nabla_x\psi)\, a 
= \iop\big(b \, \nabla_x (\psi e^{-F_0})\big)\, e^{F_0} a 
+ \iop\big( b\, \psi\big)\, a \nabla_x F_0.
\end{align}
By \eqref{eq:varphider},  
the kernel $b \nabla_x(\psi e^{-F_0})$ satisfies \eqref{eq:kernelexp} with $\a = 1$, and the same function $A(\hat\bx, x)$ as before. 
Using \eqref{eq:pivg} again, we obtain 
\eqref{eq:v0} for the first term in \eqref{eq:np}. Since $|\nabla F_0|\lesssim 1$, the bound 
\eqref{eq:v0} for the second term in \eqref{eq:np} follows from \eqref{eq:psi}. 
Using 
\eqref{eq:triangle} or \eqref{eq:pnorm} for the two terms we arrive at \eqref{eq:v0}.
\end{proof}

\subsection{Proof of Theorems \ref{thm:main} and \ref{thm:main0}}
Let $a = 1$ and $b = 1$, so that $\tilde b_R = 1$. 
Thus, by \eqref{eq:star}, it follows from \eqref{eq:factor} and 
\eqref{eq:psi}, \eqref{eq:v} that 
\begin{align*}
\|\iop(\g)\|_{3/8, \infty}
= &\ \|\Psi\|_{3/4, \infty}^2\lesssim M_{3/4}(1,1)^2,\\
\|\iop(\tau)\|_{2, \infty}
= &\ \|\SV\|_{1, \infty}^2 \lesssim M_{1}(1,1)^2. 
\end{align*}
Since 
\begin{align*}
M_q(1, 1)^2 = \bigg[\sum_{n\in\Z^d}
\|\,\rho\,\|_{\plainL1(\CC^{(4\pi)}_n)}^{\frac{q}{2}}\, \bigg]^{\frac{2}{q}}
\lesssim \bigg[\sum_{n\in\Z^d}
\|\,\rho\,\|_{\plainL1(\CC_n)}^{\frac{q}{2}}\, \bigg]^{\frac{2}{q}} 
= \4\, \rho\, \4\,_{{q}/2} , 
\end{align*}
the above bounds imply Theorem \ref{thm:main}.
The bounds \eqref{eq:gbound0} and \eqref{eq:taubound0} follow from 
\eqref{eq:psi0} and \eqref{eq:v0} in the same way. 
\qed

\end{document}